\documentclass[12pt, reqno]{article}
\usepackage{amsmath}
\usepackage{amssymb}
\usepackage{latexsym,bm}
\usepackage{amsthm}
\usepackage{graphicx}
\usepackage{cite}
\usepackage[colorlinks,urlcolor=blue]{hyperref}

\usepackage[top=3cm,bottom=3cm,left=3cm,right=3cm,includehead,includefoot]{geometry}
\usepackage{algorithm}
\usepackage{algorithmic}
\allowdisplaybreaks

\vfuzz2pt 
\hfuzz2pt 
\newtheorem{thm}{Theorem}[section]

\newtheorem{lem}{Lemma}[section]

\theoremstyle{definition}
\newtheorem{defn}{Definition}[section]
\theoremstyle{Condition}
\newtheorem{Cond}{Condition}[section]
\theoremstyle{remark}
\newtheorem{rmk}{Remark}[section]
\numberwithin{equation}{section}
\theoremstyle{example}

\numberwithin{equation}{section}

\begin{document}

\bigskip
\bigskip

\bigskip

\begin{center}

\textbf{\large Efficient primal-dual fixed point algorithm with
dynamic stepsize for convex problems with applications to imaging restoration}

\end{center}

\begin{center}
Meng Wen $^{1,2}$, Shigang Yue$^{4}$, Yuchao Tang$^{3}$, Jigen Peng$^{1,2}$
\end{center}

\begin{center}
1. School of Mathematics and Statistics, Xi'an Jiaotong University,
Xi'an 710049, P.R. China \\
2. Beijing Center for Mathematics and Information Interdisciplinary
Sciences, Beijing, P.R. China

 3. Department of Mathematics, NanChang University, Nanchang
330031, P.R. China\\
4. School of Computer Science, University of Lincoln, LN6 7TS, UK
\end{center}

\footnotetext{\hspace{-6mm}$^*$ Corresponding author.\\
E-mail address: wen5495688@163.com}

\bigskip

\noindent  \textbf{Abstract} In this paper we consider the problem
of finding the minimization of the sum  of a convex function and
the composition of another convex function with a continuous linear
operator from the view of fixed point algorithms based on proximity
operators. We design a primal-dual fixed point algorithm with
dynamic stepsize based on the proximity
operator(PDFP$^{2}O_{DS_{n}}$ for $a_{n}\subset(0,1)$)and obtain a
scheme with a closed-form solution for each iteration. Based on
Modified Mann iteration and the firmly nonexpansive properties of
the proximity operator, we achieve the convergence of the proposed
PDFP$^{2}O_{DS_{n}}$ algorithm. Moreover, under some stronger
assumptions, we can prove the global linear convergence of the
proposed algorithm. We also give the connection of the proposed
algorithm with other existing first-order methods and fixed point
algorithms FP$^{2}O$(Micchelli et al
2011 Inverse Problems 27 45009-38), PDFP$^{2}O$(Chen et al
2013 Inverse Problems 29). Finally, we illustrate the
efficiency of PDFP$^{2}O_{DS_{n}}$ through some numerical examples
on  the CT image reconstruction problem. Generally speaking, our
method PDFP$^{2}O_{DS}$ is comparable with other state-of-the-art
methods in numerical performance, while it has some advantages on
parameter selection in real applications and converges faster than
PDFP$^{2}O$.

\bigskip
\noindent \textbf{Keywords:} fixed point algorithm; convex separable
minimization; proximity operator; duality

\noindent \textbf{MR(2000) Subject Classification} 47H09, 90C25,

\section{Introduction}

The purpose of this paper is to designing and discussing an
efficient algorithmic framework  with dynamic stepsize for
minimizing the sum  of a convex function and the composition of
another convex function with a continuous linear operator, i.e.
$$\min (f_{1}\circ D)(x)+f_{2}(x),\eqno{(1.1)}$$
where $f_{1}\in\Gamma_{0}(\mathbb{R}^{m})$,
$f_{2}\in\Gamma_{0}(\mathbb{R}^{n}),$ and $f_{2}$ is differentiable
on $\mathbb{R}^{n}$ with a $1/\beta$-Lipschitz continuous gradient
for some $\beta\in(0,+\infty)$ and
$D:\mathbb{R}^{n}\rightarrow\mathbb{R}^{m}$ a linear transform. This
parameter $\beta$ is related to the convergence conditions of
algorithms presented in the following section. Here and in what
follows, for a real Hilbert space $\mathcal{H}$,
$\Gamma_{0}(\mathcal{H})$ denotes the collection of all proper lower
semi-continuous convex functions from $\mathcal{H}$ to
$(-\infty,+\infty]$. Despite its simplicity, many problems in image
processing can be  translated into the form of (1.1). For example, the
following variational sparse recovery models are often considered in
image restoration and medical image reconstruction:
$$\min\frac{1}{2}\|Ax-b\|_{2}^{2}+\lambda\psi(Dx),\eqno{(1.2)}$$
where $\|\cdot\|_{2}$ denotes the usual Euclidean norm for a vector,
$A\in\mathbb{R}^{p\times n}$ describes  a blur operator, $b\in
\mathbb{R}^{p}$ represents the blurred and noisy image  and
$\lambda>0$ is the regularization parameter in the context of
deblurring and denoising of images. The class of regularizers (1.2)
includes a plethora of methods, depending on the choice of the
function $\psi$ and of matrix $D$. Our motivation for studying this
class of penalty functions arises from sparsity inducing
regularization methods which consider $\psi$ to be either the
$l_{1}$ norm or a mixed $l_{1}-l_{2}$ norm. When $D$ is the identity
matrix, the latter case corresponds to the well-known
Group Lasso method [15], for which well studied optimization
techniques are available. Other choices of the matrix $D$ give rise
to different kinds of Group Lasso with overlapping groups [16-17],
which have proved to be effective in modeling structured sparse
regression problems. Problem (1.2) can be expressed in the form of
(1.1) by setting $f_{1}=\lambda\psi$,
$f_{2}=\frac{1}{2}\|Ax-b\|_{2}^{2}$. One of the main difficulties in
solving it is that  $\psi$ are non-differentiable. The case often
occurs in many problems we are interested in.
\par
  For
problem (1.1), Peijun Chen, Jianguo Huang and Xiaoqun Zhang proposed
 a primal-dual fixed point algorithm($PDFP^{2}O)$ in [1], i.e.
$$
\left\{
\begin{array}{l}
v_{n+1}=(I-prox_{\frac{\gamma}{\lambda}f_{1}})(D(x_{n}-\gamma\nabla f_{2}(x_{n}))+(I-\lambda DD^{T})v_{n}),\\
x_{n+1}=x_{n}-\gamma\nabla f_{2}(x_{n})-\lambda D^{T}v_{n+1},
\end{array}
\right.\eqno{(1.3)}
$$
where $0<\lambda\leq1/\lambda_{\max}(DD^{T})$, $0<\gamma<2\beta$,
and the operator $ prox_{f}$ is defined by
\begin{align*}
prox_{f}&:\mathcal{H}\rightarrow\mathcal{H}\\
& x\mapsto \arg \min_{y\in H}f(y)+\frac{1}{2}\|x-y\|_{2}^{2},
\end{align*}
called the proximity operator of $f$ . Note that this type of
splitting method was originally studied in [1,8] and the notion of
proximity operators was first introduced by Moreau in [9] as a
generalization of projection operators. For general $D$ and $f_{2}$,
each step of the proposed algorithm is explicit when
$prox_{\frac{\gamma}{\lambda}f_{1}}$ is easy to compute. However,
the proximity operators for the general form $f = f_{1} \circ D$ as
in (1.1) do not have an explicit expression, leading to the
numerical solution of a difficult subproblem. In fact for
$\lambda\psi=\mu\|\cdot\|$, the subproblem of (1.2) is
$$\min\frac{1}{2}\|x-b\|_{2}^{2}+\mu\|Dx\|,\eqno{(1.4)}$$
where $A\in\mathbb{R}^{p\times n}$ describes  a blur operator, $b
\in\mathbb{R}^{p}$ denotes a corrupted image to be denoised.
\par
 The obvious advantage of the algorithm($PDFP^{2}O)$ proposed by Chen et al [1] for problem (1.1) is
that it is very easy for parallel implementation. However,
in this paper we aim to provide a more general iteration in which
the coefficient $\gamma$ is made iteration-dependent to solve the
general problem (1.1), errors are allowed in the evaluation of the
operators $prox_{\frac{\gamma}{\lambda}f_{1}}$ and $\nabla f_{2}$,
and a relaxation sequence $\lambda_{n}$ is introduced. The errors
allow for some tolerance in the numerical implementation of the
algorithm, while the flexibility introduced by the
iteration-dependent parameters  $\gamma_{n}$ and $\lambda_{n}$ can
be used to improve its convergence pattern. In addition, we will
reformulate our fixed point type of methods and show their
connections with some existing first-order methods and primal-dual
fixed point algorithm for (1.1) and (1.2).

The rest of this paper is organized as follows. In the next section,
 we recall the  primal-dual fixed point
algorithm($PDFP^{2}O)$ and some related works and then deduce the
proposed PDFP$^{2}O_{DS}$ algorithm and its extension
PDFP$^{2}O_{DS_{n}}$ from our intuitions. In section 3, we first
deduce PDFP$^{2}O_{DS_{n}}$ again in the setting of fixed point
iteration; we then establish its convergence under a general setting
and the convergence rate under some stronger assumptions on $\nabla
f_{2}$ and $D$. In section 4, we give the equivalent form of
PDFP$^{2}O_{DS}$, and the relationships and differences with other
first-order algorithms. In the final section, we show the numerical
performance and efficiency of PDFP$^{2}O_{DS_{n}}$ through some
examples on on  the CT image reconstruction problem  and compare
their performances to the ones of some iterative schemes recently
introduced in the literature.

\section{Fixed Point Algorithms Based on Proximity Operators}

Similar to the proximity algorithms(FP$^{2}$O) for Image Models:
Denoising proposed by Micchelli et al [8], Andreas Argyriou et al
proposed an algorithm called IFP$^{2}$O in [10] to solve
$$\min (f_{1}\circ D)(x)+\frac{1}{2}x^{T}Qx-b^{T}x ,$$
where $x\in\mathbb{R}^{n}$, $Q\in M_{n}$, with $M_{n}$ being the
collection of all symmetric positive definite $n\times n$ matrices,
$b\in\mathbb{R}^{n}$. Define
$$H(v)=(I-prox_{\frac{f_{1}}{\lambda}})(DQ^{-1}b+(I-\lambda DQ^{-1}D^{T})v) \,\,for \,all\,\, v\in \mathbb{R}^{m}.$$
Then, the corresponding algorithm is given below, called algorithm
1, which can be viewed as a fixed point algorithm based on the
inverse matrix and proximity operator(IF
P$^{2}$O). Here $H_{\kappa}$
is the $\kappa$-averaged operator of $H$, i.e. $H_{\kappa} = \kappa
I + (1 - \kappa)H$ for $\kappa \in (0, 1)$; see definition 3.3 in
the following section, the matrix  $Q$ is assumed to be invertible
and the inverse can be easily calculated, which is unfortunately not
the case in most of the applications in imaging science. Moreover,
there is no theoretical guarantee of convergence if the linear
system is only solved approximately.

\begin{algorithm}[H]
\caption{FP$^{2}$O based on inverse matrix, IFP$^{2}$O [10].}
\begin{algorithmic}\label{1}
\STATE Step 1: Choose $v_{0}\in \mathbb{R}^{m}$,
$0<\lambda\leq2/\lambda_{\max}(DQ^{-1}D^{T})$, $\kappa\in(0,1)$.\\
Step 2: calculate $v^{\ast} $, which is the fixed point of $H$, with
iteration $v_{n+1}=H_{\kappa}(v_{n})$.\\
Step 3: $x^{\ast}=Q^{-1}(b-\lambda D^{T}v^{\ast} ). $
\end{algorithmic}
\end{algorithm}
 The authors in [10] combined a proximal forward-backward
splitting (PFBS) algorithm proposed by Combettes and Wajs [2] and
FP$^{2}$O for solving problem (1.3), for which we call
PFBS$_{-}$FP$^{2}$O (cf algorithm 2 below). Precisely speaking, at
step k in PFBS, after one forward iteration
$x_{n+1/2}=x_{n}-\gamma\nabla f_{2}(x_{n}) $, we need to solve for
$x_{n+1}=prox_{\gamma f_{1}\circ D}(x_{n+1/2})$. FP$^{2}$O is then
used to solve this subproblem, i.e. the fixed point $v_{n+1}^{\ast}$
of $H_{x_{n+1/2}}$ is obtained by the fixed iteration form
$\underline{v}_{k+1}=(H_{x_{n+1/2}})_{\kappa}(\underline{v}_{k})$,
where
$$H_{x_{n+1/2}}(v)=(I-prox_{\gamma f_{1}\circ D})(Dx_{n+1/2}+(I-\lambda DD^{T})v) \,\,for \,all\,\, v\in \mathbb{R}^{m}.\eqno{(2.1)}$$
Then $x_{n+1}$ is given by setting $x_{n+1}=x_{n+1/2}-\lambda
D^{T}v_{n+1}^{\ast}$. The acceleration combining with the Nesterov
method [11-14] was also considered in [10]. But the algorithm 2
involves inner and outer iterations, and it is often problematic to
set the appropriate inner stopping conditions to balance
computational time and precision.
\begin{algorithm}[H]
\caption{Proximal forward-backward splitting based on FP$^{2}$O,
PFBS$_{-}$FP$^{2}$O [10].}
\begin{algorithmic}\label{1}
\STATE Step 1: Choose $x_{0}\in \mathbb{R}^{n}$, $0<\gamma<2\beta$.\\
Step 2: for $k=0,1,2,\ldots$\\
        $x_{n+1/2}=x_{n}-\gamma\nabla
        f_{2}(x_{n})$,\\
        calculate the fixed
point $v_{n+1}^{\ast}$ of $H_{x_{n+1/2}}$, with
iteration $\underline{v}_{n+1}=(H_{x_{n+1/2}})_{\kappa}(\underline{v}_{k})$,\\
$x_{n+1}=x_{n+1/2}-\lambda D^{T}v_{n+1}^{\ast}$.\\
end for
\end{algorithmic}
\end{algorithm}
Further, the authors in [1] suppose $\kappa = 0$ in FP$^{2}$O, the
idea is to take the numerical solution  $v_{n}$ of the fixed point
of $H_{x_{(n-1)+1/2}}$ as the initial value, and only perform one
iteration for solving the fixed point of $H_{x_{n+1/2}}$ ; then they
 obtained the  iteration scheme (1.4),  i.e.

$$
\left\{
\begin{array}{l}
v_{n+1}=(I-prox_{\frac{\gamma}{\lambda}f_{1}})(D(x_{n}-\gamma\nabla f_{2}(x_{n}))+(I-\lambda DD^{T})v_{n}),\\
x_{n+1}=x_{n}-\gamma\nabla f_{2}(x_{n})-\lambda D^{T}v_{n+1}.
\end{array}
\right.
$$
Then, the corresponding algorithm is given below, called algorithm
3. Since $v$ is actually the dual variable of the primal-dual form
related to (1.1), so algorithm 3 can be viewed as a primal-dual
fixed point algorithm based on the proximity operator(PDFP$^{2}O$).
\begin{algorithm}[H]
\caption{Primal-dual fixed point algorithm based on proximity
operator, PDFP$^{2}O$ [1].}
\begin{algorithmic}\label{1}
\STATE Initialization: Choose $x_{0}\in \mathbb{R}^{n}$, $v_{0}\in
\mathbb{R}^{m}$, $0<\lambda\leq1/\lambda_{\max}(DD^{T})$, $0<\gamma<2\beta$.\\
Iterations ($n\geq0$): Update $x_{n}$, $v_{n}$, $x_{n+\frac{1}{2}}$
as follows
$$
\left\{
\begin{array}{l}
x_{n+\frac{1}{2}}=x_{n}-\gamma_\nabla f_{2}(x_{n}),\\
v_{n+1}=(I-prox_{\frac{\gamma}{\lambda}f_{1}})(Dx_{n+\frac{1}{2}}+(I-\lambda DD^{T})v_{n}),\\
x_{n+1}=x_{n+\frac{1}{2}}-\lambda D^{T}v_{n+1}.
\end{array}
\right.
$$
\end{algorithmic}
\end{algorithm}
Moreover, borrowing the fixed point formulation of PDFP$^{2}O$, the
authors in [1] introduce a relaxation parameter $\kappa \in [0, 1)$
to obtain algorithm 4, which is exactly a Picard method with
parameters. If $\kappa = 0$, then PDFP$^{2}O_{\kappa}$ reduces to
PDFP$^{2}O$.
\begin{algorithm}[H]
\caption{PDFP$^{2}O_{\kappa}$ [1].}
\begin{algorithmic}\label{1}
\STATE Initialization: Choose $x_{0}\in \mathbb{R}^{n}$, $v_{0}\in
\mathbb{R}^{m}$, $0<\lambda\leq1/\lambda_{\max}(DD^{T})$, $0<\gamma<2\beta$, $\kappa\in[0,1)$.\\
Iterations ($n\geq0$): Update $x_{n}$, $v_{n}$, $x_{n+\frac{1}{2}}$
as follows
$$
\left\{
\begin{array}{l}
x_{n+\frac{1}{2}}=x_{n}-\gamma_\nabla f_{2}(x_{n}),\\
\tilde{v}_{n+1}=(I-prox_{\frac{\gamma}{\lambda}f_{1}})(Dx_{n+\frac{1}{2}}+(I-\lambda DD^{T})v_{n}),\\
\tilde{x}_{n+1}=x_{n+\frac{1}{2}}-\lambda
D^{T}\tilde{v}_{n+1},\\
v_{n+1}=\kappa v_{n}+(1-\kappa)\tilde{v}_{n+1},\\
x_{n+1}=\kappa x_{n}+(1-\kappa)\tilde{x}_{n+1}.
\end{array}
\right.
$$
\end{algorithmic}
\end{algorithm}
The fixed point characterization provided by Peijun Chen et al [1]
suggests solving Problem (1.1 ) via the fixed point iteration scheme
(1.3) for a suitable value of the parameter $\gamma$, $\lambda$.
This iteration, which is referred to as a primal-dual fixed point
algorithm for convex separable minimization with applications to
image restoration. A very natural idea is to provide a more general
iteration in which the coefficient $\gamma$ is made
iteration-dependent to solve the general problem (1.1), then  we can
obtain the following iteration scheme:
$$
\left\{
\begin{array}{l}
v_{n+1}=(I-prox_{\frac{\gamma_{n}}{\lambda_{n}}f_{1}})(D(x_{n}-\gamma_{n} \nabla f_{2}(x_{n}))+(I-\lambda_{n} DD^{T})v_{n}),\\
x_{n+1} =x_{n}-\gamma_{n} \nabla f_{2}(x_{n})-\lambda
_{n}D^{T}v_{n+1},
\end{array}
\right.\eqno{(2.2)}
$$
which produces our proposed method algorithm 5, described below.
This algorithm can also be deduced from the fixed point formulation,
whose detail we will give in the following section. On the other
hand, since the parameter $\gamma_{n}$ and $\lambda_{n}$ are
dynamic, so we call our method a primal-dual fixed point algorithm
based on proximity operator with dynamic stepsize, and abbreviate it
as PDFP$^{2}O_{DS}$. If $\gamma_{n}\equiv\gamma$,
$\lambda_{n}\equiv\lambda$ then  form (2.2) is equivalent to form
(1.3). So PDFP$^{2}O$ can be seen as a special case of
PDFP$^{2}O_{DS}$. Moreover, PFEP and FP$^{2}O$ are also the special
case of PDFP$^{2}O_{DS}$. We will show the connection to this
algorithm and other ones in section 4.

\begin{algorithm}[H]
\caption{ Primal-dual fixed point algorithm based on proximity
operator with dynamic stepsize PDFP$^{2}O_{DS}$}
\begin{algorithmic}\label{1}
\STATE Initialization: Choose $x_{0}\in \mathbb{R}^{n}$,  $v_{0}\in
\mathbb{R}^{m}$,
$0<\liminf_{n\rightarrow\infty}\gamma_{n}\leq\limsup_{n\rightarrow\infty}\gamma_{n}<2\beta$,
$0<\liminf_{n\rightarrow\infty}\lambda_{n}\leq\limsup_{n\rightarrow\infty}\lambda_{n}\leq1/\lambda_{\max}(DD^{T})$.\\
Iterations ($n\geq0$): Update $x_{n}$, $v_{n}$, $y_{n}$  as follows
$$
\left\{
\begin{array}{l}
z_{n+1}=x_{n}-\gamma_{n}\nabla f_{2}(x_{n}),\\
v_{n+1}=(I-prox_{\frac{\gamma_{n}}{\lambda_{n}}f_{1}})(Dz_{n+1}+(I-\lambda_{n}DD^{T})v_{n}),\\
x_{n+1}=z_{n+1}-\lambda_{n}D^{T}v_{n+1}.
\end{array}
\right.
$$

\end{algorithmic}
\end{algorithm}
Borrowing the fixed point formulation of PDFP$^{2}O_{DS}$, we can
introduce a relaxation parameter $\alpha_{n}\subset(0,1)$ to obtain
algorithm 6, which is exactly a Mann method with parameters. The
rule for parameter selection will be illustrated in section 3.
 Our theoretical analysis
for PDFP$^{2}O_{DS_{n}}$ given in the following section is mainly
based on this fixed point setting.

\begin{algorithm}[H]
\caption{  PDFP$^{2}O_{DS_{n}}$}
\begin{algorithmic}\label{2}
\STATE Initialization: Choose $x_{0}\in \mathbb{R}^{n}$,  $v_{0}\in
\mathbb{R}^{m}$,
$0<\liminf_{n\rightarrow\infty}\gamma_{n}\leq\limsup_{n\rightarrow\infty}\gamma_{n}<2\beta$,
$0<\liminf_{n\rightarrow\infty}\lambda_{n}\leq\limsup_{n\rightarrow\infty}\lambda_{n}\leq1/\lambda_{\max}(DD^{T})$, $\alpha_{n}\subset(0,1)$.\\
Iterations ($n\geq0$): Update $x_{n}$, $v_{n}$, $y_{n}$  as follows
$$
\left\{
\begin{array}{l}
z_{n+1}=x_{n}-\gamma_{n}\nabla f_{2}(x_{n}),\\
\tilde{v}_{n+1}=(I-prox_{\frac{\gamma_{n}}{\lambda_{n}}f_{1}})(Dz_{n+1}++(I-\lambda_{n}DD^{T})v_{n})\\
\tilde{x}_{n+1}=z_{n+1}-\lambda_{n}D^{T}\tilde{v}_{n+1},\\
v_{n+1}=\alpha_{n}v_{n}+(1-\alpha_{n})\tilde{v}_{n+1},\\
x_{n+1}=\alpha_{n}x_{n}+(1-\alpha_{n})\tilde{x}_{n+1}.
\end{array}
\right.
$$

\end{algorithmic}
\end{algorithm}
\section{Convergence analysis}
\subsection{General convergence}
First of all, let us mention some related definitions and lemmas for
later requirements. We always assume
that problem (1.1) has at least one solution. As shown in [2], if
the objective function $(f_{1}\circ D)(x)+f_{2}(x)$ is coercive,
i.e.
$$\lim_{\|x\|^{2}\rightarrow+\infty}((f_{1}\circ D)(x)+f_{2}(x))=+\infty,$$
then the existence of solution can be ensured for (1.1).
\begin{defn}
(Subdifferential [3]). Let $f$ be a function in
$\Gamma_{0}(\mathcal{H})$.  The subdifferential of $f$ is the
set-valued operator $\partial f : \mathcal{H} \rightarrow
2^{\mathcal{H}}$ , the value of which at $x \in  \mathcal{H}$ is
$$\partial f(x)=\{v\in\mathcal{H}|\langle v,y-x\rangle+f(x)\leq f(y)\,\,\, for\,\,all\,\,\, y\in \mathcal{H}^{2}\},$$
where $\langle\cdot,\cdot\rangle$ denotes the inner-product over
$\mathcal{H}$.
\end{defn}

\begin{defn}
(Nonexpansive operators and firmly nonexpansive operators [3]). An
operator $T : \mathcal{H} \rightarrow {\mathcal{H}}$ is nonexpansive
if and only if it satisfies
$$\|Tx-Ty\|_{2}\leq\|x-y\|_{2}\,\,\, for\,\,all\,\,\, (x,y)\in \mathcal{H}^{2}.$$
$T$ is firmly nonexpansive if and only if it satisfies one of the
following equivalent conditions:
\par
(i)$\|Tx-Ty\|_{2}^{2}\leq\langle Tx-Ty,x-y\rangle$\,\,\,
for\,\,all\,\,\, $(x,y)\in \mathcal{H}^{2}$.
\par
(ii)$\|Tx-Ty\|_{2}^{2}=\|x-y\|_{2}^{2}-\|(I-T)x-(I-T)y\|_{2}^{2}$\,\,\,
for\,\,all\,\,\, $(x,y)\in \mathcal{H}^{2}$.
\par
It is easy to show from the above definitions that a firmly
nonexpansive operator T is nonexpansive.
\end{defn}
\begin{lem}
Suppose $f\in\Gamma_{0}(\mathbb{R}^{m})$ and $x\in \mathbb{R}^{m}$.
Then there holds
$$y\in\partial f(x)\Longleftrightarrow x=prox_{f}(x+y).\eqno{(3.1)}$$
Furthermore, if $f$ has $1/\beta$-Lipschitz continuous gradient,
then
$$\langle\nabla f(x)-\nabla f(y),x-y\rangle\geq\beta\|\nabla f(x)-\nabla f(y)\|^{2}\,\,\, for\,\,all\,\,\, (x,y)\in\mathbb{R}^{m}.\eqno{(3.2)}$$
\end{lem}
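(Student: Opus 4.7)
The plan is to prove the two parts of Lemma 3.1 separately, as they rest on different tools: part (3.1) is a first-order optimality argument for the proximity operator, while part (3.2) is the Baillon--Haddad co-coercivity inequality.

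For the equivalence (3.1), I would start from the defining minimization $prox_f(x+y) = \arg\min_z \{f(z) + \tfrac{1}{2}\|z-(x+y)\|^2\}$. Since $f\in\Gamma_0(\mathbb{R}^m)$ and the quadratic term is strongly convex with full-domain differentiable gradient, the sum rule for subdifferentials applies, so $\bar z$ is the minimizer if and only if $0\in\partial f(\bar z)+(\bar z-(x+y))$. Substituting $\bar z=x$ turns this into $y\in\partial f(x)$, which gives both directions at once (using that strong convexity makes the minimizer unique). I do not foresee any obstacle here; it is a direct unpacking of Definition 3.1 and the proximity operator.

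For the co-coercivity inequality (3.2), I would use the standard Baillon--Haddad trick. Fix $y$ and define the auxiliary function $\phi(u):=f(u)-\langle\nabla f(y),u\rangle$. Then $\phi$ is convex and its gradient $\nabla\phi=\nabla f-\nabla f(y)$ is still $1/\beta$-Lipschitz, and $\nabla\phi(y)=0$ so $y$ minimizes $\phi$. Applying the descent lemma (which follows from the Lipschitz gradient bound by integrating along the segment),
\[
\phi(y)\;\le\;\phi\!\left(u-\beta\nabla\phi(u)\right)\;\le\;\phi(u)-\tfrac{\beta}{2}\,\|\nabla\phi(u)\|^2,
\]
yields, with $u=x$,
\[
f(x)-f(y)-\langle\nabla f(y),x-y\rangle\;\ge\;\tfrac{\beta}{2}\|\nabla f(x)-\nabla f(y)\|^2.
\]
Swapping the roles of $x$ and $y$ gives the symmetric inequality, and adding the two produces exactly (3.2).

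The main conceptual step — and the only one that requires real work — is justifying the descent lemma at the strength $\phi(u-\beta\nabla\phi(u))\le\phi(u)-\tfrac{\beta}{2}\|\nabla\phi(u)\|^2$, since that is what converts the Lipschitz bound on $\nabla f$ into a quadratic upper envelope of $\phi$ and then into a one-step decrease at the optimally tuned stepsize $\beta$. Once that descent inequality is in hand, everything else is algebraic: the fact that $y$ is a global minimizer of $\phi$ (not merely a critical point) is immediate from convexity of $\phi$, and the symmetrization to get $\langle\nabla f(x)-\nabla f(y),x-y\rangle$ is a one-line addition. No further hypotheses beyond convexity of $f$ and Lipschitzness of $\nabla f$ are needed, which matches the statement.
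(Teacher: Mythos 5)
Your proof is correct, and it is genuinely more self-contained than the paper's, which gives no argument at all: the paper disposes of (3.1) by citing Proposition 2.6 of Micchelli--Shen--Xu [4], and of (3.2) by quoting from Combettes--Wajs [2] the Baillon--Haddad fact that $\beta\nabla f$ is firmly nonexpansive, which unpacks via Definition 3.2(i) applied to $T=\beta\nabla f$ (divide $\beta^{2}\|\nabla f(x)-\nabla f(y)\|^{2}\leq\beta\langle\nabla f(x)-\nabla f(y),x-y\rangle$ by $\beta$) to exactly the inequality (3.2). What you have supplied are precisely the standard proofs of those two cited results. For (3.1), your optimality condition $0\in\partial f(\bar z)+\bar z-(x+y)$ with the sum rule (valid since the quadratic is everywhere differentiable) and uniqueness from strong convexity is the proof of [4, Prop.\ 2.6], and both directions do come out of the single equivalence as you say. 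For (3.2), your computation checks out: with stepsize $\beta=1/L$ the descent lemma gives $\phi\left(u-\beta\nabla\phi(u)\right)\leq\phi(u)-\tfrac{\beta}{2}\|\nabla\phi(u)\|^{2}$, the bound $\phi(y)\leq\phi\left(x-\beta\nabla\phi(x)\right)$ uses that $y$ is a \emph{global} minimizer of $\phi$ (this is where convexity of $f$ enters essentially --- the descent lemma alone would not give co-coercivity), and adding the resulting inequality $f(x)-f(y)-\langle\nabla f(y),x-y\rangle\geq\tfrac{\beta}{2}\|\nabla f(x)-\nabla f(y)\|^{2}$ to its swap yields (3.2) exactly. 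In short: the paper's citation route is shorter and matches how the lemma is consumed later (only co-coercivity and firm nonexpansiveness of $I-prox_{f}$ are used downstream), while your version makes the lemma self-contained and makes visible exactly which hypotheses (convexity plus Lipschitz gradient) carry the load.
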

\begin{proof}
The first result is nothing but proposition 2.6 of [4]. If $f$ has
$1/\beta$-Lipschitz continuous gradient, we have from [2] that
$\beta\nabla f$ is firmly nonexpansive, which implies (3.2) readily.

\end{proof}

\begin{lem}
(Lemma 2.4 of [2]). Let $f$ be a function in
$\Gamma_{0}(\mathbb{R}^{m})$. Then $prox_{f}$ and $I- prox_{f}$ are
both firmly nonexpansive operators.
\end{lem}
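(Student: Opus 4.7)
The plan is to establish firm nonexpansiveness of $prox_f$ directly from the subdifferential characterization (3.1) together with monotonicity of $\partial f$, and then derive the corresponding property for $I - prox_f$ by a short algebraic identity.

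For the first step, fix $u_1, u_2 \in \mathbb{R}^m$ and set $p_i = prox_f(u_i)$ for $i = 1, 2$. Applying (3.1) with the roles $x = p_i$ and $y = u_i - p_i$ shows that $u_i - p_i \in \partial f(p_i)$. Since $f \in \Gamma_0(\mathbb{R}^m)$, adding the two subgradient inequalities of Definition 3.1 taken at $p_1$ and $p_2$ (each evaluated against the other point) yields monotonicity of $\partial f$, so that
$$\langle (u_1 - p_1) - (u_2 - p_2),\; p_1 - p_2 \rangle \geq 0.$$
Rearranging gives $\|p_1 - p_2\|_2^2 \leq \langle u_1 - u_2,\; p_1 - p_2 \rangle$, which is exactly condition (i) of Definition 3.2 for $prox_f$.

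For the second step, set $S = I - prox_f$. Expanding $\|Sx - Sy\|_2^2 = \|(x-y) - (prox_f x - prox_f y)\|_2^2$ and $\langle Sx - Sy, x-y \rangle = \|x-y\|_2^2 - \langle prox_f x - prox_f y, x-y \rangle$, the common terms cancel and leave the identity
$$\langle Sx - Sy,\; x - y \rangle - \|Sx - Sy\|_2^2 = \langle prox_f x - prox_f y,\; x - y \rangle - \|prox_f x - prox_f y\|_2^2.$$
The right-hand side is nonnegative by the firm nonexpansiveness of $prox_f$ just established, so $I - prox_f$ also satisfies condition (i) of Definition 3.2.

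There is no serious obstacle here: the only nontrivial ingredient is the characterization (3.1), after which monotonicity of $\partial f$ is an immediate symmetrization of Definition 3.1 and the passage from $prox_f$ to $I - prox_f$ is a one-line polarization identity. This is why the result is used as a lemma in the later convergence analysis of PDFP$^{2}O_{DS_{n}}$.
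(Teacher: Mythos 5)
Your proof is correct, and in fact complete where the paper is not: the paper supplies no argument at all for this lemma, importing it verbatim as Lemma 2.4 of [2] (Combettes and Wajs), so there is no internal proof to compare against. Your derivation is essentially the standard one from that reference: the characterization (3.1) with $x=p_i$, $y=u_i-p_i$ correctly identifies $u_i-prox_f(u_i)\in\partial f(prox_f(u_i))$; monotonicity of $\partial f$, obtained by symmetrizing the subgradient inequality of Definition 3.1 (legitimate here since $f(p_i)<+\infty$, the $p_i$ being proximal points of a proper function), yields condition (i) of Definition 3.2 for $prox_f$; and your polarization identity, which checks out, transfers condition (i) to $I-prox_f$. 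One remark: the second half can be obtained with no computation by invoking the equivalence of conditions (i) and (ii) in Definition 3.2, because (ii) --- read with the intended inequality $\|Tx-Ty\|_2^2\leq\|x-y\|_2^2-\|(I-T)x-(I-T)y\|_2^2$ in place of the printed equality, which is evidently a typo (an equality would fail already for $T=\frac{1}{2}I$) --- is symmetric under the exchange of $T$ and $I-T$, so firm nonexpansiveness of $prox_f$ immediately gives that of $I-prox_f$. Your identity is in effect a direct proof of that symmetry, so nothing is missing; it merely re-derives a fact the paper's Definition 3.2 already encodes.
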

\begin{lem}
(The Resolvent Identity [5,6]). For $\lambda>0$ and $\nu>0$ and $x\in E$,\\
$$J_{\lambda}x=J_{\nu}(\frac{\nu}{\lambda}+(1-\frac{\nu}{\lambda})J_{\lambda}x).$$
\end{lem}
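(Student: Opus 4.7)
The plan is to reduce the identity to the defining property of the resolvent. Reading $J_{\lambda}=(I+\lambda A)^{-1}$ for the (implicitly) maximal monotone operator $A$ on $E$, the single structural fact I would exploit is that $y=J_{\lambda}x$ is equivalent to $\tfrac{x-y}{\lambda}\in Ay$. Everything else should be a one-line algebraic manipulation that transports this inclusion to the corresponding inclusion for $J_{\nu}$.

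Concretely, the steps I would carry out are: first, set $y:=J_{\lambda}x$, so that $\tfrac{x-y}{\lambda}\in Ay$; second, introduce the convex combination $z:=\tfrac{\nu}{\lambda}x+\bigl(1-\tfrac{\nu}{\lambda}\bigr)y$ (reading the right-hand side of the displayed identity with a factor $x$ that appears to be missing in the excerpt's typesetting); third, compute
$$\frac{z-y}{\nu}=\frac{1}{\nu}\left(\frac{\nu}{\lambda}x+\Bigl(1-\frac{\nu}{\lambda}\Bigr)y-y\right)=\frac{1}{\nu}\cdot\frac{\nu}{\lambda}(x-y)=\frac{x-y}{\lambda}.$$
Since the right-hand side lies in $Ay$, so does $\tfrac{z-y}{\nu}$; by the same characterization this forces $y=J_{\nu}z$, and substituting back the definitions of $y$ and $z$ yields precisely the stated identity.

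The main obstacle is interpretive rather than technical: one must first recognize that the formula as printed is the standard Hilbert-space resolvent identity (so a factor $x$ needs to be reinstated after $\tfrac{\nu}{\lambda}$) and then use that the resolvent is single-valued, which follows from maximal monotonicity of $A$. Once that reading is fixed, the proof reduces to the single line of algebra above, and no nontrivial estimate or appeal to the firmly nonexpansive properties of Lemma~3.2 (which would give a quantitative Lipschitz bound but not the identity itself) is needed.
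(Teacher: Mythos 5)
Your proof is correct, and your interpretive point is the right one: as printed, the identity is missing the argument $x$ after $\tfrac{\nu}{\lambda}$, and the intended statement is $J_{\lambda}x=J_{\nu}\bigl(\tfrac{\nu}{\lambda}x+(1-\tfrac{\nu}{\lambda})J_{\lambda}x\bigr)$. Note, however, that the paper contains no proof of this lemma at all — it is quoted as a known result from [5,6] (Bruck--Passty and Bruck--Reich), so there is no in-paper argument to compare against; what you have supplied is precisely the classical one-line proof underlying those references. Your chain — set $y:=J_{\lambda}x$, use the equivalence $y=J_{\lambda}x\iff\tfrac{x-y}{\lambda}\in Ay$, verify $\tfrac{z-y}{\nu}=\tfrac{x-y}{\lambda}$ for $z:=\tfrac{\nu}{\lambda}x+(1-\tfrac{\nu}{\lambda})y$, and conclude $y=J_{\nu}z$ — is complete, with the only structural input being single-valuedness of $J_{\nu}$. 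One refinement: the cited sources state the identity for accretive operators on a Banach space $E$, whereas you assume $A$ maximal monotone on a Hilbert space; your algebra is insensitive to this distinction (accretivity or monotonicity enters only to make the resolvent single-valued), and in the paper's actual use of the lemma (the proof of Theorem 3.2, where $J_{\lambda\partial f_{1}}=prox_{\lambda f_{1}}$, i.e.\ $A=\partial f_{1}$ on $\mathbb{R}^{m}$) your Hilbert-space reading is exactly the relevant one. You are also right that the firm nonexpansiveness from Lemma 3.2 is irrelevant here: it yields Lipschitz estimates, not the exact identity.
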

\begin{lem}
( [7]). Let $H$ be a real Hilbert space with inner product $\langle
\cdot,  \cdot \rangle$ and norm $\|\cdot\|$ , then \\
$\forall x,y\in H, \forall \alpha\in[0,1], \|\alpha
x+(1-\alpha)y\|^{2}=\alpha\|x\|^{2}+(1-\alpha)\|y\|^{2}-\alpha(1-\alpha)\|x-y\|^{2}.$

\end{lem}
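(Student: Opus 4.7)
The plan is to verify this identity by direct expansion in terms of the inner product on $H$, since the identity is a purely algebraic consequence of bilinearity together with the definition $\|u\|^{2}=\langle u,u\rangle$. No properties of the ambient space beyond the inner-product axioms are required, so no approximation or limiting argument enters.

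First I would expand the left-hand side using bilinearity and symmetry of the inner product:
\begin{equation*}
\|\alpha x+(1-\alpha)y\|^{2}=\alpha^{2}\|x\|^{2}+2\alpha(1-\alpha)\langle x,y\rangle+(1-\alpha)^{2}\|y\|^{2}.
\end{equation*}
Next I would expand the correction term on the right-hand side:
\begin{equation*}
\alpha(1-\alpha)\|x-y\|^{2}=\alpha(1-\alpha)\|x\|^{2}-2\alpha(1-\alpha)\langle x,y\rangle+\alpha(1-\alpha)\|y\|^{2}.
\end{equation*}
Subtracting this from $\alpha\|x\|^{2}+(1-\alpha)\|y\|^{2}$ and simplifying the coefficients, I would use $\alpha-\alpha(1-\alpha)=\alpha^{2}$ and $(1-\alpha)-\alpha(1-\alpha)=(1-\alpha)^{2}$ to collect the $\|x\|^{2}$ and $\|y\|^{2}$ terms, while the cross terms assemble into $2\alpha(1-\alpha)\langle x,y\rangle$.

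Comparing the two expressions term by term shows that they are identical, which gives the claimed identity. There is essentially no obstacle: the only thing to be careful about is bookkeeping of the coefficients, and in particular recognizing that the hypothesis $\alpha\in[0,1]$ is not actually needed for the equality itself (it will, however, be needed when this lemma is later applied to convex combinations in the convergence analysis of PDFP$^{2}O_{DS_{n}}$, since the term $\alpha(1-\alpha)\|x-y\|^{2}$ is then nonnegative and can be exploited to control the distance between successive iterates).
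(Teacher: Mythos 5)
Your proof is correct. The paper gives no proof of this lemma at all — it is quoted verbatim from reference [7] (Bauschke--Combettes) — so your direct expansion via bilinearity, using $\alpha-\alpha(1-\alpha)=\alpha^{2}$ and $(1-\alpha)-\alpha(1-\alpha)=(1-\alpha)^{2}$ to match coefficients, simply supplies the standard one-line argument behind the citation; your observation that $\alpha\in[0,1]$ is irrelevant to the identity itself and only matters for the sign of the term $\alpha(1-\alpha)\|x-y\|^{2}$ when the lemma is invoked in the estimate (3.26) of Theorem 3.3 is likewise accurate.
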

\begin{lem}
([7]).  Let $C$ be a nonempty closed convex subset of   $H$,  $T :
C\rightarrow C$  is a nonexpansive mapping, and
$Fix(T)\neq\emptyset$. Then the mapping $I-T$ is demiclosed at zero,
that is $x_{n}\rightharpoonup x$ and $\|x_{n}-Tx_{n}\|\rightarrow
0$, then $x=Tx$.
\end{lem}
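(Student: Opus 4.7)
The plan is to prove this classical demiclosedness principle of Browder by arguing by contradiction and leveraging Opial's inequality in a Hilbert space. The hypothesis that $\mathrm{Fix}(T)\neq\emptyset$ is actually not needed for the conclusion, but it is harmless. The only two ingredients I would draw on are the nonexpansiveness of $T$ and the following Opial-type property of $H$: if $x_n\rightharpoonup x$, then for every $y\neq x$,
\begin{equation*}
\liminf_{n\to\infty}\|x_n-x\| < \liminf_{n\to\infty}\|x_n-y\|.
\end{equation*}

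First, I would establish Opial's inequality as a preparatory step. Expanding via the inner-product identity,
\begin{equation*}
\|x_n-y\|^2 = \|x_n-x\|^2 + 2\langle x_n-x,\, x-y\rangle + \|x-y\|^2,
\end{equation*}
and noting that $\langle x_n-x, x-y\rangle\to 0$ by weak convergence, I would take $\liminf$ on both sides to conclude $\liminf_n\|x_n-y\|^2 = \liminf_n\|x_n-x\|^2 + \|x-y\|^2$. Since $\|x-y\|>0$ for $y\neq x$, the strict inequality follows.

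Next, with this in hand, I would suppose for contradiction that $Tx\neq x$, so Opial's inequality applies with $y=Tx$. Using the triangle inequality together with the nonexpansiveness of $T$,
\begin{equation*}
\|x_n-Tx\| \leq \|x_n-Tx_n\| + \|Tx_n-Tx\| \leq \|x_n-Tx_n\| + \|x_n-x\|.
\end{equation*}
Because $\|x_n-Tx_n\|\to 0$ by hypothesis, passing to $\liminf$ on both sides yields
\begin{equation*}
\liminf_{n\to\infty}\|x_n-Tx\| \leq \liminf_{n\to\infty}\|x_n-x\|,
\end{equation*}
which directly contradicts Opial's inequality applied to $y=Tx$. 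Hence $Tx=x$, completing the proof.

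The main obstacle, if one were not willing to quote Opial's inequality, would be the preparatory verification of that inequality; once it is available, the argument is a short one-inequality contradiction. Since the result is cited from reference [7], in the paper I would likely just state the lemma and refer to the cited source, but the sketch above records the self-contained argument I would use if forced to reproduce it.
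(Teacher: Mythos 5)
Your argument is correct, and it is the classical Opial-inequality proof of Browder's demiclosedness principle. There is, however, nothing in the paper to compare it against: the paper states this lemma without proof, citing it from [7] (Bauschke--Combettes), exactly as you anticipated at the end of your sketch; your write-up essentially reproduces the textbook argument from such sources. Two points you pass over silently are worth making explicit if the proof is to be self-contained. First, for $Tx$ to even be defined you need $x\in C$: the sequence $x_{n}$ lies in $C$, and a closed convex subset of a Hilbert space is weakly closed, so the weak limit $x$ stays in $C$ --- this is the only place the hypotheses ``closed'' and ``convex'' enter, and, as you correctly observe, $\mathrm{Fix}(T)\neq\emptyset$ is never used. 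Second, your liminf manipulations (splitting off a convergent summand, and the identity $\liminf_{n}\|x_{n}-y\|^{2}=\bigl(\liminf_{n}\|x_{n}-y\|\bigr)^{2}$) need the finiteness of these quantities, which follows from the standard fact that weakly convergent sequences are bounded; that should be said. Finally, a remark specific to this paper: the lemma is invoked only in $\mathbb{R}^{m}\times\mathbb{R}^{n}$ (in the proof of Theorem 3.3), where weak and strong convergence coincide and demiclosedness is immediate from the continuity of the nonexpansive map $T$, so in the paper's actual setting the full Opial machinery is not needed --- your proof establishes the stronger, genuinely infinite-dimensional statement as the lemma is formally stated.
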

\par
The following lemmas are obtained from the reference [ 1 ].
\par
From reference [ 1 ], we know that for any two positive numbers
$\lambda$ and $\gamma$ , define
$T_{1}:\mathbb{R}^{m}\times\mathbb{R}^{n}\rightarrow\mathbb{R}^{m}$
as
$$T_{1}(v,x)=(I-prox_{\frac{\gamma}{\lambda}f_{1}})(D(x-\gamma \nabla f_{2}(x))+(I-\lambda DD^{T})v)\eqno{(3.3)}$$
and
$T_{2}:\mathbb{R}^{m}\times\mathbb{R}^{n}\rightarrow\mathbb{R}^{m}$
as
$$T_{2}(v,x)=x-\gamma\nabla f_{2}(x)-\lambda D^{T}\circ T_{1}.\eqno{(3.4)}$$

Denote
$$T(v,x)=(T_{1}(v,x),T_{2}(v,x)).\eqno{(3.5)}$$
\begin{lem}
Let $\lambda$ and $\gamma$  be two positive numbers. Suppose that
$\hat{x}$ is a solution of (1.1). Then there exists $\hat{v}\in
\mathbb{R}^{m}$ such that
$$
 \left\{
\begin{array}{l}
\hat{v}=T_{1}(\hat{v},\hat{x}),\\
\hat{x}=T_{2}(\hat{v},\hat{x}).
\end{array}
\right.
$$
In other words, $\hat{u }= (\hat{v},\hat{x})$ is a fixed point of
$T$. Conversely, if $\hat{u}\in\mathbb{R}^{m}\times\mathbb{R}^{n}$
is a fixed point of $T$, with $\hat{u }= (\hat{v},\hat{x})$,
$\hat{v}\in \mathbb{R}^{m}$, $\hat{x}\in \mathbb{R}^{n}$ then
$\hat{x}$ is a solution of (1.1).
\end{lem}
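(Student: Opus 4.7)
The plan is to identify the two fixed point equations $\hat v=T_1(\hat v,\hat x)$ and $\hat x=T_2(\hat v,\hat x)$ with the Fermat/KKT optimality conditions for problem (1.1), the link being the change of variable $\hat v=(\gamma/\lambda)\hat w$ where $\hat w\in\partial f_1(D\hat x)$. Both directions will then reduce to algebraic rearrangement together with a single invocation of Lemma 3.1, namely the characterization $y\in\partial f(x)\Longleftrightarrow x=prox_f(x+y)$.

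For the forward direction, I start from a minimizer $\hat x$ of (1.1). By the sum and chain rules for subdifferentials (applicable here because $f_2$ is finite-valued and differentiable on $\mathbb{R}^n$, so a qualification is automatic), the optimality condition reads
$$0\in D^{T}\partial f_1(D\hat x)+\nabla f_2(\hat x),$$
and I pick $\hat w\in\partial f_1(D\hat x)$ with $\nabla f_2(\hat x)+D^{T}\hat w=0$, then set $\hat v=(\gamma/\lambda)\hat w$. The equation $\hat x=T_2(\hat v,\hat x)$ is then equivalent to $\gamma\nabla f_2(\hat x)+\lambda D^{T}\hat v=0$, which holds by construction. Using this same identity, the $-\lambda DD^{T}\hat v$ contribution inside $T_1(\hat v,\hat x)$ exactly cancels the $-\gamma D\nabla f_2(\hat x)$ contribution, so the argument of $I-prox_{(\gamma/\lambda)f_1}$ collapses to $D\hat x+\hat v$. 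Thus $\hat v=T_1(\hat v,\hat x)$ becomes $D\hat x=prox_{(\gamma/\lambda)f_1}(D\hat x+\hat v)$, and Lemma 3.1 applied to $(\gamma/\lambda)f_1$ turns this into $\hat v\in\partial((\gamma/\lambda)f_1)(D\hat x)=(\gamma/\lambda)\partial f_1(D\hat x)$, i.e.\ $\hat w\in\partial f_1(D\hat x)$, which is how $\hat w$ was chosen.

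The converse direction is a reversal of the same chain. Starting from a fixed point $(\hat v,\hat x)$ of $T$, I read off $\nabla f_2(\hat x)=-(\lambda/\gamma)D^{T}\hat v$ from $\hat x=T_2(\hat v,\hat x)$; substituting into $T_1(\hat v,\hat x)$ again simplifies its argument to $D\hat x+\hat v$, and Lemma 3.1 produces $(\lambda/\gamma)\hat v\in\partial f_1(D\hat x)$. Combining this inclusion with the gradient identity yields $0\in D^{T}\partial f_1(D\hat x)+\nabla f_2(\hat x)$, which is both necessary and sufficient for $\hat x$ to minimize the convex objective in (1.1).

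I do not expect a real obstacle here. The only step that requires a moment of care is the subdifferential chain rule $\partial(f_1\circ D)(x)=D^{T}\partial f_1(Dx)$, but in the finite-dimensional setting of the paper, with $f_2$ finite everywhere and a solution of (1.1) assumed to exist, the standard qualification is automatic. Everything else is an observation that the $-\lambda DD^{T}\hat v+\gamma D\nabla f_2(\hat x)=0$ cancellation forced by the $T_2$-equation reduces the $T_1$-equation to the proximity-operator identity treated by Lemma 3.1.
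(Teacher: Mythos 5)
Your proof is correct and follows essentially the same route as the paper's source for this lemma (the paper states it without proof, importing it from reference [1], whose argument is exactly yours): translate both fixed point equations into the first-order optimality condition $0\in D^{T}\partial f_{1}(D\hat{x})+\nabla f_{2}(\hat{x})$ via the scaling $\hat{v}=(\gamma/\lambda)\hat{w}$, observe that the $T_{2}$-equation forces $\gamma\nabla f_{2}(\hat{x})+\lambda D^{T}\hat{v}=0$ so that the argument of $I-prox_{(\gamma/\lambda)f_{1}}$ collapses to $D\hat{x}+\hat{v}$, and then invoke the equivalence $y\in\partial f(x)\Longleftrightarrow x=prox_{f}(x+y)$ of Lemma 3.1. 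One caveat on your justification: the qualification you call ``automatic'' concerns the chain rule $\partial(f_{1}\circ D)(x)=D^{T}\partial f_{1}(Dx)$, and finiteness and differentiability of $f_{2}$ are irrelevant to it (they only give the sum rule); what is actually needed in the forward direction is a condition such as $\mathrm{ri}(\mathrm{dom}\, f_{1})\cap \mathrm{ran}\, D\neq\emptyset$, which holds in the paper's intended applications where $f_{1}$ is finite-valued (e.g.\ a norm) but not for arbitrary $f_{1}\in\Gamma_{0}(\mathbb{R}^{m})$ --- an implicit assumption the paper and [1] share, while your converse direction correctly uses only the inclusion $D^{T}\partial f_{1}(Dx)\subseteq\partial(f_{1}\circ D)(x)$, which needs no qualification.
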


Denote
$$g(x)=x-\gamma \nabla f_{2}(x),\,\, for\,\,all\,\,\, x\in \mathbb{R}^{n}.\eqno{(3.6)}$$

$$M=I-\lambda DD^{T}.\eqno{(3.7)}$$
When $0<\lambda\leq1/\lambda_{max}(DD^{T})$, $M$ is a symmetric
positive semi-definite matrix, so we can define the semi-norm
$$\|V\|_{M}=\sqrt{\langle v,Mv\rangle},\,\, for\,\,all\,\,\, v\in \mathbb{R}^{m}.\eqno{(3.8)}$$
For an element $u=(v,x)\in \mathbb{R}^{m}\times\mathbb{R}^{n}$, with
$v\in \mathbb{R}^{m}$ and $x\in\mathbb{R}^{n}$, let
$$\|u\|_{\lambda}=\sqrt{\|x\|_{2}^{2}+\lambda\|v\|_{2}^{2}}.\eqno{(3.9)}$$
 We can easily see that
$\|\cdot\|_{\lambda}$ is a norm over the produce space
$\mathbb{R}^{m}\times\mathbb{R}^{n}$ whenever $\lambda > 0$.

 According to the definitions in
(3.3)-(3.5), the component form of $u_{n+1} = T (u_{n} )$ can be
expressed as
$$
\left\{
\begin{array}{l}
v_{n+1}=T_{1}(v_{n},x_{n})=(I-prox_{\frac{\gamma}{\lambda}f_{1}})(D(x_{n}-\gamma \nabla f_{2}(x_{n}))+(I-\lambda DD^{T})v_{n}),\\
x_{n+1}=T_{2}(v_{n},x_{n})=x_{n}-\gamma \nabla
f_{2}(x_{n})-\lambda D^{T}\circ T_{1}(v_{n},x_{n})\\
=x_{n}-\gamma \nabla f_{2}(x_{n})-\lambda D^{T}v_{n+1}.
\end{array}
\right.
$$
Therefore, the iteration $u_{n+1} = T (u_{n} )$ is equivalent to
(1.3).

\begin{lem}
If $0<\gamma<2\beta$, $0<\lambda\leq1/\lambda_{max}(DD^{T})$, then
$T$ is nonexpansive under the norm $\|\cdot\|_{\lambda}$.
\end{lem}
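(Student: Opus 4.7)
The plan is to pick two arbitrary points $u_i=(v_i,x_i)\in\mathbb{R}^m\times\mathbb{R}^n$, $i=1,2$, set $\tilde{v}_i=T_1(v_i,x_i)$, $\tilde{x}_i=T_2(v_i,x_i)$, and estimate $\|T(u_1)-T(u_2)\|_\lambda^2=\|\tilde{x}_1-\tilde{x}_2\|^2+\lambda\|\tilde{v}_1-\tilde{v}_2\|^2$ directly, showing it is bounded above by $\|x_1-x_2\|^2+\lambda\|v_1-v_2\|^2$. Three ingredients from the preceding lemmas will be used: cocoercivity of $\nabla f_2$ to control the gradient step $g(x)=x-\gamma\nabla f_2(x)$; firm nonexpansiveness of $I-\mathrm{prox}_{\frac{\gamma}{\lambda}f_1}$ to control the proximity step; and positive semi-definiteness of $M=I-\lambda DD^T$ to glue the two pieces together in the $\|\cdot\|_\lambda$ norm.

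First I would prove the preliminary fact that $g$ is nonexpansive whenever $0<\gamma<2\beta$: expanding $\|g(x_1)-g(x_2)\|^2$ and invoking (3.2) from Lemma~3.1 gives $\|g(x_1)-g(x_2)\|^2\le \|x_1-x_2\|^2-\gamma(2\beta-\gamma)\|\nabla f_2(x_1)-\nabla f_2(x_2)\|^2\le\|x_1-x_2\|^2$. Next, writing $y_i=Dg(x_i)+Mv_i$ so that $\tilde{v}_i=(I-\mathrm{prox}_{\frac{\gamma}{\lambda}f_1})(y_i)$, Lemma~3.2 (part (i)) yields
\[
\|\tilde{v}_1-\tilde{v}_2\|^2\le\langle\tilde{v}_1-\tilde{v}_2,\,D(g(x_1)-g(x_2))\rangle+\langle\tilde{v}_1-\tilde{v}_2,\,M(v_1-v_2)\rangle.
\]

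Then I would expand $\|\tilde{x}_1-\tilde{x}_2\|^2$ using $\tilde{x}_i=g(x_i)-\lambda D^T\tilde{v}_i$, obtaining
\[
\|\tilde{x}_1-\tilde{x}_2\|^2=\|g(x_1)-g(x_2)\|^2-2\lambda\langle D(g(x_1)-g(x_2)),\,\tilde{v}_1-\tilde{v}_2\rangle+\lambda^2\|D^T(\tilde{v}_1-\tilde{v}_2)\|^2.
\]
Adding $\lambda\|\tilde{v}_1-\tilde{v}_2\|^2$, bounding it via the firm nonexpansiveness inequality above to kill one copy of the awkward cross term $\lambda\langle D(g(x_1)-g(x_2)),\tilde{v}_1-\tilde{v}_2\rangle$, and collecting $\lambda\|\tilde{v}_1-\tilde{v}_2\|^2-\lambda^2\langle DD^T(\tilde{v}_1-\tilde{v}_2),\tilde{v}_1-\tilde{v}_2\rangle=\lambda\|\tilde{v}_1-\tilde{v}_2\|_M^2$ leaves the compact bound
\[
\|T(u_1)-T(u_2)\|_\lambda^2\le\|x_1-x_2\|^2-\lambda\|\tilde{v}_1-\tilde{v}_2\|_M^2+2\lambda\langle\tilde{v}_1-\tilde{v}_2,\,M(v_1-v_2)\rangle,
\]
after also using $\|g(x_1)-g(x_2)\|^2\le\|x_1-x_2\|^2$.

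The main (and only real) obstacle is to show that the last two terms are dominated by $\lambda\|v_1-v_2\|^2$, i.e.\ that
\[
\|v_1-v_2\|^2+\|\tilde{v}_1-\tilde{v}_2\|_M^2-2\langle\tilde{v}_1-\tilde{v}_2,\,M(v_1-v_2)\rangle\ge 0.
\]
The hypothesis $\lambda\le 1/\lambda_{\max}(DD^T)$ makes $0\preceq M\preceq I$, so $\|v_1-v_2\|^2\ge\|v_1-v_2\|_M^2$; substituting this lower bound rewrites the left side as $\|(v_1-v_2)-(\tilde{v}_1-\tilde{v}_2)\|_M^2\ge 0$, which closes the estimate. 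Combining everything gives $\|T(u_1)-T(u_2)\|_\lambda\le\|u_1-u_2\|_\lambda$, proving $T$ is nonexpansive under $\|\cdot\|_\lambda$.
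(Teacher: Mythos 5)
Your proof is correct: each step checks out, and the closing inequality $\|v_1-v_2\|^2+\|\tilde v_1-\tilde v_2\|_M^2-2\langle \tilde v_1-\tilde v_2,\,M(v_1-v_2)\rangle\ge\|(v_1-v_2)-(\tilde v_1-\tilde v_2)\|_M^2\ge 0$, valid because $0\preceq M\preceq I$ under $0<\lambda\le 1/\lambda_{\max}(DD^{T})$, does finish the estimate. Note that this paper states the lemma without proof, importing it from reference [1]; your three ingredients --- nonexpansiveness of $g(x)=x-\gamma\nabla f_2(x)$ via the cocoercivity bound (3.2) with $0<\gamma<2\beta$, firm nonexpansiveness of $I-\mathrm{prox}_{\frac{\gamma}{\lambda}f_1}$ from Lemma 3.2, and positive semi-definiteness of $M=I-\lambda DD^{T}$ to absorb the cross terms in the $\|\cdot\|_{\lambda}$ norm --- are exactly the route taken in the cited source, so you have in effect reconstructed the omitted proof.
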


\begin{lem}
Suppose $0<\gamma<2\beta$, $0<\lambda\leq1/\lambda_{max}(DD^{T})$.
Let $u_{n} = (v_{n}, x_{n} )$ be the sequence generated by
$PDFP^{2}O$. Then the sequence $\{u_{n}\}$ converges to a fixed
point of $T$, and the sequence$\{x_{n}\}$ converges to a solution of
problem (1.1).

\end{lem}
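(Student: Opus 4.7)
The plan is to follow the standard Fej\'er-monotone / asymptotic-regularity / demi-closedness route for Picard iterations of nonexpansive maps, starting from Lemma~3.7 but sharpening the inequality it provides. By Lemma~3.6, the (assumed nonempty) solution set of (1.1) yields a nonempty $\mathrm{Fix}(T)$. Fix any $\hat u = (\hat v,\hat x) \in \mathrm{Fix}(T)$; Lemma~3.7 then gives $\|u_{n+1} - \hat u\|_\lambda \le \|u_n - \hat u\|_\lambda$, so $\{u_n\}$ is bounded and $\{\|u_n - \hat u\|_\lambda\}$ is decreasing and convergent.

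Next I would revisit the proof of Lemma~3.7 and retain the residuals that are normally discarded. The firm nonexpansiveness of $I - prox_{(\gamma/\lambda)f_1}$ (Lemma~3.2) and the cocoercivity of $\nabla f_2$ (inequality (3.2) in Lemma~3.1) each strengthen nonexpansiveness to a strict-improvement estimate; combining them with $M = I - \lambda DD^T \succeq 0$ should yield
$$\|u_{n+1} - \hat u\|_\lambda^2 \le \|u_n - \hat u\|_\lambda^2 - c_1 \|\nabla f_2(x_n) - \nabla f_2(\hat x)\|_2^2 - c_2 \|u_{n+1} - u_n\|_\lambda^2$$
for constants $c_1, c_2 > 0$ depending only on $\gamma, \beta, \lambda$ and $\lambda_{\max}(DD^T)$. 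Summing this telescopically in $n$ gives $\sum_{n\ge 0} \|u_{n+1} - u_n\|_\lambda^2 < \infty$, and in particular the asymptotic regularity $\|u_n - T u_n\|_\lambda \to 0$.

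Since $\mathbb{R}^m \times \mathbb{R}^n$ is finite-dimensional, the bounded sequence $\{u_n\}$ admits a convergent subsequence $u_{n_k} \to u^*$. Passing to the limit in $\|u_{n_k} - T u_{n_k}\|_\lambda \to 0$ via continuity of $T$ (nonexpansive by Lemma~3.7), or equivalently by invoking the demiclosedness principle in Lemma~3.5, produces $u^* \in \mathrm{Fix}(T)$. Applying Fej\'er monotonicity with $\hat u := u^*$ shows that $\|u_n - u^*\|_\lambda$ converges; since a subsequence tends to $0$, the whole sequence does, and $u_n \to u^*$. The $x$-component then converges to a point $x^*$ which solves (1.1) by Lemma~3.6.

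The delicate step is the sharpening in the second paragraph: one must carefully expand $\|T u_n - \hat u\|_\lambda^2$, keep the $\|\cdot\|_M$ contribution coming from the $v$-component and the cocoercivity slack coming from the $x$-component, and verify that the leftover residual dominates $\|u_{n+1} - u_n\|_\lambda^2$ with a strictly positive coefficient. This is precisely what the standing hypotheses $\gamma < 2\beta$ and $\lambda \le 1/\lambda_{\max}(DD^T)$ are designed to guarantee. Without such a strict improvement the bare nonexpansiveness of $T$ would not suffice, since Picard iterations of generic nonexpansive maps can fail to converge; it is the hidden averaged/firmly-nonexpansive structure of $T$ that makes the scheme work.
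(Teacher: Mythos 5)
Your route --- Fej\'er monotonicity from Lemmas 3.6 and 3.7, a sharpened residual inequality exploiting the firmly nonexpansive structure, asymptotic regularity, a convergent subsequence whose limit is a fixed point, and then the Fej\'er argument plus Lemma 3.6 to conclude --- is exactly the argument of the cited source [1], which this paper invokes in lieu of giving its own proof of the lemma; it is also the template the paper reuses for its Theorem 3.3. However, the one step you defer (``the delicate step'') hides a claim that is, as literally stated, false in a case the hypotheses allow. Carrying out the expansion of $\|u_{n+1}-\hat u\|_{\lambda}^{2}$ with the firm nonexpansiveness of $I-prox_{\frac{\gamma}{\lambda}f_{1}}$ and the cocoercivity estimate (3.2), the slack one actually obtains is of the form
$$\|u_{n+1}-\hat u\|_{\lambda}^{2}\leq\|u_{n}-\hat u\|_{\lambda}^{2}-\gamma(2\beta-\gamma)\|\nabla f_{2}(x_{n})-\nabla f_{2}(\hat x)\|_{2}^{2}-\lambda^{2}\|D^{T}(v_{n}-\hat v)\|_{2}^{2}-\lambda\|v_{n+1}-v_{n}\|_{M}^{2},$$
so the $v$-increment is controlled only in the seminorm $\|\cdot\|_{M}$ of (3.8). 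When $\lambda=1/\lambda_{\max}(DD^{T})$ --- permitted by the lemma's hypothesis --- the matrix $M=I-\lambda DD^{T}$ is singular, and no constant $c_{2}>0$ can make your claimed term $-c_{2}\|u_{n+1}-u_{n}\|_{\lambda}^{2}$ follow from this slack alone; your proposed one-step inequality fails there, and with it the direct telescoping to $\sum\|u_{n+1}-u_{n}\|_{\lambda}^{2}<\infty$.

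The repair is an extra idea your sketch is missing. Use the identity $\|w\|_{2}^{2}=\|w\|_{M}^{2}+\lambda\|D^{T}w\|_{2}^{2}$: telescoping the displayed inequality gives $\|v_{n+1}-v_{n}\|_{M}\rightarrow0$ and $D^{T}v_{n}\rightarrow D^{T}\hat v$ along the whole sequence, hence $D^{T}(v_{n+1}-v_{n})\rightarrow0$, and the identity then yields $v_{n+1}-v_{n}\rightarrow0$ in the Euclidean norm. For the $x$-increment, note that at any fixed point $\gamma\nabla f_{2}(\hat x)+\lambda D^{T}\hat v=0$, so $x_{n+1}-x_{n}=-\gamma(\nabla f_{2}(x_{n})-\nabla f_{2}(\hat x))-\lambda D^{T}(v_{n+1}-\hat v)\rightarrow0$. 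This restores asymptotic regularity $\|u_{n}-Tu_{n}\|_{\lambda}=\|u_{n}-u_{n+1}\|_{\lambda}\rightarrow0$, after which the remainder of your argument (subsequence, continuity of $T$ --- Lemma 3.5 is not even needed in finite dimensions --- Fej\'er monotonicity with $\hat u:=u^{*}$, and Lemma 3.6) goes through unchanged.
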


Now, we are ready to discuss the convergence of
PDFP$^{2}O_{DS_{n}}$. To this end, let
$0<\liminf_{n\rightarrow\infty}\gamma_{n}\leq\limsup_{n\rightarrow\infty}\gamma_{n}<2\beta$,
$0<\liminf_{n\rightarrow\infty}\lambda_{n}\leq\limsup_{n\rightarrow\infty}\lambda_{n}\leq1/\lambda_{\max}(DD^{T})$
, define
$T_{1}^{n}:\mathbb{R}^{m}\times\mathbb{R}^{n}\rightarrow\mathbb{R}^{m}$
as
$$T_{1}^{n}(v,x)=(I-prox_{\frac{\gamma_{n}}{\lambda_{n}}f_{1}})(D(x-\gamma_{n} \nabla f_{2}(x))+(I-\lambda_{n} DD^{T})v)\eqno{(3.10)}$$
and
$T^{n}_{2}:\mathbb{R}^{m}\times\mathbb{R}^{n}\rightarrow\mathbb{R}^{m}$
as
$$T_{2}^{n}(v,x)=x-\gamma_{n}\nabla f_{2}(x)-\lambda_{n} D^{T}\circ T^{n}_{1}.\eqno{(3.11)}$$

 Denote
$$T^{n}(v,x)=(T^{n}_{1}(v,x),T^{n}_{2}(v,x)).\eqno{(3.12)}$$
\par
In the following, we will show the  algorithm PDFP$^{2}O_{DS_{n}}$
is a  modified Mann iterative method related to the operator $S^{n}$. \\
\begin{thm}
Suppose
$0<\liminf_{n\rightarrow\infty}\alpha_{n}\leq\limsup_{n\rightarrow\infty}\alpha_{n}<1$.
Set $S^{n}=\alpha_{n}I+(1-\alpha_{n})T^{n}$. Then the sequence
$u_{n}$ of $S^{n}$ is exactly the one obtained by the algorithm
PDFP$^{2}O_{DS_{n}}$.
\end{thm}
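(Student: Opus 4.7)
The plan is essentially to unpack definitions: the theorem asserts that a Mann-type averaging applied to the fixed point operator $T^n$ reproduces the update rules displayed in algorithm 6, so the proof is a direct component-wise identification. I would set $u_n=(v_n,x_n)$ and compute $S^n(u_n)=\alpha_n u_n+(1-\alpha_n)T^n(u_n)$ one coordinate at a time.

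First, I would introduce the auxiliary quantity $z_{n+1}=x_n-\gamma_n\nabla f_2(x_n)$ from the first line of algorithm 6. Substituting this into formula (3.10) gives
\[
T_1^n(v_n,x_n)=\bigl(I-\mathrm{prox}_{\tfrac{\gamma_n}{\lambda_n}f_1}\bigr)\bigl(Dz_{n+1}+(I-\lambda_n DD^T)v_n\bigr),
\]
which is precisely $\tilde v_{n+1}$ in algorithm 6. Inserting this identity into (3.11) yields
\[
T_2^n(v_n,x_n)=z_{n+1}-\lambda_n D^T T_1^n(v_n,x_n)=z_{n+1}-\lambda_n D^T\tilde v_{n+1},
\]
which is exactly $\tilde x_{n+1}$.

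Second, using (3.12), I would write
\[
S^n(u_n)=\alpha_n(v_n,x_n)+(1-\alpha_n)(\tilde v_{n+1},\tilde x_{n+1}),
\]
and read off componentwise
\[
\bigl(\alpha_n v_n+(1-\alpha_n)\tilde v_{n+1},\; \alpha_n x_n+(1-\alpha_n)\tilde x_{n+1}\bigr)=(v_{n+1},x_{n+1}),
\]
which matches the last two lines of algorithm 6. Hence the Mann iterate $u_{n+1}=S^n(u_n)$ coincides with the output of PDFP$^2$O$_{DS_n}$ at step $n+1$, and an easy induction from the common initial data $u_0=(v_0,x_0)$ completes the identification.

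There is no real obstacle in this proof; the step that most deserves care is simply bookkeeping the order of the substitution, namely that $\tilde v_{n+1}$ must be computed before $\tilde x_{n+1}$ because the latter depends on the former through $T_1^n$. The hypothesis $0<\liminf \alpha_n\le\limsup\alpha_n<1$ plays no role in the identification itself and is reserved for the subsequent convergence analysis via averaged-operator arguments (cf. Lemmas above on nonexpansiveness and demiclosedness).
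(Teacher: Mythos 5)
Your proposal is correct and takes essentially the same approach as the paper: a direct component-wise unpacking of definitions (3.10)--(3.12) showing that $T_1^n(v_n,x_n)=\tilde v_{n+1}$ and $T_2^n(v_n,x_n)=\tilde x_{n+1}$, followed by the Mann averaging step. If anything, you are slightly more complete than the paper, which writes out only the unrelaxed iteration $u_{n+1}=T^n(u_n)$ and dismisses the general $S^n$ case with ``employing the similar argument,'' whereas you carry the $\alpha_n$-averaging through explicitly and correctly observe that the bounds on $\alpha_n$ are not needed for the identification itself.
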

\begin{proof}
According to the definitions in (3.10)-(3.12), the component form of
$u_{n+1} = T^{n} (u_{n} )$ can be expressed as
$$
\left\{
\begin{array}{l}
v_{n+1}=T^{n}_{1}(v_{n},x_{n})=(I-prox_{\frac{\gamma_{n}}{\lambda_{n}}f_{1}})(D(x_{n}-\gamma_{n} \nabla f_{2}(x_{n}))+(I-\lambda_{n} DD^{T})v_{n}),\\
x_{n+1}=T^{n}_{2}(v_{n},x_{n})=x_{n}-\gamma_{n}
\nabla f_{2}(x_{n})-\lambda_{n} D^{T}\circ T_{1}^{n}(v_{n},x_{n})\\
=x_{n}-\gamma _{n}\nabla f_{2}(x_{n})-\lambda_{n} D^{T}v_{n+1}.
\end{array}
\right.
$$
Therefore, the iteration $u_{n+1} = T ^{n}(u_{n} )$ is equivalent to
(2.2). Employing the similar argument, we can obtain the conclusion
for general $S^{n}$ with
$0<\liminf_{n\rightarrow\infty}\alpha_{n}\leq\limsup_{n\rightarrow\infty}\alpha_{n}<1$.
\end{proof}
\begin{rmk}
From the last result, we find out that algorithm
PDFP$^{2}O_{DS_{n}}$ can also be obtained in the setting of fixed
point iteration immediately.
\end{rmk}

\begin{thm}
 Let $T^{n}$, $T$
be defined by 3.12, 3.5  respectively, suppose
$0<\liminf_{n\rightarrow\infty}\gamma_{n}\leq\limsup_{n\rightarrow\infty}\gamma_{n}<2\beta$,
$0<\liminf_{n\rightarrow\infty}\lambda_{n}\leq\limsup_{n\rightarrow\infty}\lambda_{n}\leq1/\lambda_{\max}(DD^{T})$,
if for any bounded sequence
$\{u_{n}\}\subset\mathbb{R}^{m}\times\mathbb{R}^{n}$,
$$\lim_{n\rightarrow\infty}\|u_{n}-T^{n}(u_{n})\|_{\lambda}=0,$$ then
there exists a subsequence $\{u_{n_{k}}\}\subset\{u_{n}\}$ such that
$\lim_{n_{k}\rightarrow\infty}\|u_{n_{k}}-T(u_{n_{k}})\|_{\lambda}=0$
.
\end{thm}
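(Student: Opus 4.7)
The plan is to pass to a subsequence along which the dynamic stepsizes stabilise and then show that $T^{n_k}$ is uniformly close to the limit operator $T$ on the bounded sequence.

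First, since $0<\liminf_n\gamma_n\leq\limsup_n\gamma_n<2\beta$ and $0<\liminf_n\lambda_n\leq\limsup_n\lambda_n\leq 1/\lambda_{\max}(DD^T)$, the sequences $\{\gamma_n\}$ and $\{\lambda_n\}$ lie in compact subintervals of $(0,+\infty)$. By Bolzano--Weierstrass I would extract a common subsequence $\{n_k\}$ with $\gamma_{n_k}\to\gamma^\ast\in(0,2\beta)$ and $\lambda_{n_k}\to\lambda^\ast\in(0,1/\lambda_{\max}(DD^T)]$, and take $T$ to be the operator built from these limit values via (3.3)--(3.5).

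Second, the triangle inequality
$$\|u_{n_k}-T(u_{n_k})\|_\lambda\leq\|u_{n_k}-T^{n_k}(u_{n_k})\|_\lambda+\|T^{n_k}(u_{n_k})-T(u_{n_k})\|_\lambda$$
reduces the problem to showing the second summand tends to $0$, since the first vanishes by hypothesis. Writing $u_{n_k}=(v_{n_k},x_{n_k})$ and using boundedness of $\{u_{n_k}\}$, I would estimate the two blocks $T_1^{n_k}-T_1$ and $T_2^{n_k}-T_2$ separately, combining three elementary perturbation bounds: the Lipschitz estimate $\|\gamma_{n_k}\nabla f_2(x_{n_k})-\gamma^\ast\nabla f_2(x_{n_k})\|\leq|\gamma_{n_k}-\gamma^\ast|\,\|\nabla f_2(x_{n_k})\|$, the linear estimate $\|(\lambda_{n_k}-\lambda^\ast)DD^Tv_{n_k}\|\leq|\lambda_{n_k}-\lambda^\ast|\,\lambda_{\max}(DD^T)\,\|v_{n_k}\|$, and the nonexpansiveness of $I-prox_{\theta f_1}$ from Lemma 3.4, which carries these small perturbations of the argument through the outer proximal step without amplification.

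The main obstacle is the dependence of the inner proximal operator on the varying ratio $\gamma_{n_k}/\lambda_{n_k}$. I would resolve this through the Resolvent Identity (Lemma 3.5): setting $\mu=\gamma_{n_k}/\lambda_{n_k}$ and $\nu=\gamma^\ast/\lambda^\ast$ and applying nonexpansiveness of $J_\nu$ to $J_\mu y=J_\nu\bigl(\tfrac{\nu}{\mu}y+(1-\tfrac{\nu}{\mu})J_\mu y\bigr)$ yields
$$\|J_\mu y-J_\nu y\|\leq\bigl|1-\tfrac{\nu}{\mu}\bigr|\,\|J_\mu y-y\|.$$
Since the argument $y=Dz_{n_k+1}+(I-\lambda_{n_k}DD^T)v_{n_k}$ stays bounded and $\mu\to\nu$, the right-hand side tends to $0$. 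Combining all four perturbation estimates gives $\|T^{n_k}(u_{n_k})-T(u_{n_k})\|_\lambda\to 0$, and the displayed triangle inequality then delivers $\|u_{n_k}-T(u_{n_k})\|_\lambda\to 0$, completing the argument.
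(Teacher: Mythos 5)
Your proposal is correct and follows essentially the same route as the paper's own proof: extract convergent subsequences $\gamma_{n_k}\to\gamma$, $\lambda_{n_k}\to\lambda$, split $\|u_{n_k}-T(u_{n_k})\|_{\lambda}$ by the triangle inequality, estimate the $T_{1}$ and $T_{2}$ blocks separately via nonexpansiveness of the proximity operators, and handle the varying prox parameter through the resolvent identity, exactly matching the paper's estimates (3.13)--(3.23). The only discrepancies are cosmetic lemma-number swaps (the resolvent identity is the paper's Lemma 3.3 and the nonexpansiveness of $I-prox_{f}$ is its Lemma 3.2), which do not affect the argument.
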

\begin{proof}
Since the sequence $\gamma_{n}$ is bounded, there exists a
subsequence $\gamma_{n_{k}}\subset\gamma_{n}$ such that
$\gamma_{n_{k}}\rightarrow \gamma$ with $\gamma\in (0,2\beta)$.
Since the sequence $\lambda_{n}$ is bounded, there exists a
subsequence $\lambda_{n_{k}}\subset\lambda_{n}$ such that
$\lambda_{n_{k}}\rightarrow \lambda$ with $\lambda\in
(0,1/\lambda_{max}(DD^{T})]$. Let $T$ be defined by 3.5, since
$\gamma\in (0,2\beta)$ and $\lambda\in (0,1/\lambda_{max}(DD^{T})]$,
so $T$ is a
 nonexpansive mapping under the norm $\|\cdot\|_{\lambda}$. Since
sequence $\{u_{n}\}$ is bounded and
$\lim_{n\rightarrow\infty}\|u_{n}-T^{n}(u_{n})\|_{\lambda}=0$.\\
 We can know that
\begin{align*}
\|u_{n_{k}}-T(u_{n_{k}})\|_{\lambda}&\leq\|u_{n_{k}}-T^{n_{k}}(u_{n_{k}})\|_{\lambda}+\|T^{n_{k}}u_{n_{k}}-T(u_{n_{k}})\|_{\lambda}.\tag{3.13}
\end{align*}
From (3.9) we know
\begin{align*}
\|T^{n_{k}}u_{n_{k}}-T(u_{n_{k}})\|_{\lambda}^{2}&=\|T_{1}^{n_{k}}u_{n_{k}}-T_{1}(u_{n_{k}})\|^{2}+\lambda\|T_{2}^{n_{k}}u_{n_{k}}-T_{2}(u_{n_{k}})\|^{2}.\tag{3.14}
\end{align*}
By lemma 3.2, $I-prox_{\frac{\gamma_{n}}{\lambda_{n}}f_{1}}$ is a
firmly nonexpansive operator. So
\begin{align*}
\|T_{1}^{n_{k}}u_{n_{k}}-T_{1}(u_{n_{k}})\|&=\|(I-prox_{\frac{\gamma_{n_{k}}}{\lambda_{n_{k}}}f_{1}})(D(x_{n_{k}}-\gamma_{n_{k}}
\nabla f_{2}(x_{n_{k}}))\\
&+(I-\lambda_{n_{k}}
DD^{T})v_{n_{k}})-(I-prox_{\frac{\gamma}{\lambda}f_{1}})(D(x_{n_{k}}-\gamma
\nabla f_{2}(x_{n_{k}}))\\
&+(I-\lambda DD^{T})v_{n_{k}})\| \\
&=\|prox_{\frac{\gamma_{n_{k}}}{\lambda_{n_{k}}}f_{1}}(D(x_{n_{k}}-\gamma_{n_{k}}
\nabla f_{2}(x_{n_{k}}))\\
&+(I-\lambda_{n_{k}}
DD^{T})v_{n_{k}})-prox_{\frac{\gamma}{\lambda}f_{1}}(D(x_{n_{k}}-\gamma
\nabla f_{2}(x_{n_{k}}))\\
&+(I-\lambda DD^{T})v_{n_{k}})\| \\
&\leq\|prox_{\frac{\gamma_{n_{k}}}{\lambda_{n_{k}}}f_{1}}(D(x_{n_{k}}-\gamma_{n_{k}}
\nabla f_{2}(x_{n_{k}}))\\
&+(I-\lambda_{n_{k}}
DD^{T})v_{n_{k}})-prox_{\frac{\gamma}{\lambda}f_{1}}(D(x_{n_{k}}-\gamma_{n_{k}}
\nabla f_{2}(x_{n_{k}}))\\
&+(I-\lambda_{n_{k}}DD^{T})v_{n_{k}})+prox_{\frac{\gamma}{\lambda}f_{1}}(D(x_{n_{k}}-\gamma_{n_{k}}
\nabla f_{2}(x_{n_{k}}))\\
&+(I-\lambda_{n_{k}}DD^{T})v_{n_{k}})-prox_{\frac{\gamma}{\lambda}f_{1}}(D(x_{n_{k}}-\gamma
\nabla f_{2}(x_{n_{k}}))\\
&+(I-\lambda DD^{T})v_{n_{k}})\| \\
&\leq\|prox_{\frac{\gamma_{n_{k}}}{\lambda_{n_{k}}}f_{1}}(D(x_{n_{k}}-\gamma_{n_{k}}
\nabla f_{2}(x_{n_{k}}))\\
&+(I-\lambda_{n_{k}}
DD^{T})v_{n_{k}})-prox_{\frac{\gamma}{\lambda}f_{1}}(D(x_{n_{k}}-\gamma_{n_{k}}
\nabla f_{2}(x_{n_{k}}))\\
&+(I-\lambda_{n_{k}}DD^{T})v_{n_{k}})\|+\|prox_{\frac{\gamma}{\lambda}f_{1}}(D(x_{n_{k}}-\gamma_{n_{k}}
\nabla f_{2}(x_{n_{k}}))\\
&+(I-\lambda_{n_{k}}DD^{T})v_{n_{k}})-prox_{\frac{\gamma}{\lambda}f_{1}}(D(x_{n_{k}}-\gamma
\nabla f_{2}(x_{n_{k}}))\\
&+(I-\lambda DD^{T})v_{n_{k}})\|.\tag{3.15}
\end{align*}
Let $z_{n_{k}}=D(x_{n_{k}}-\gamma_{n_{k}} \nabla
f_{2}(x_{n_{k}}))+(I-\lambda_{n_{k}} DD^{T})v_{n_{k}}$. Since
$J_{\lambda \partial f_{1}}=(I+\lambda\partial
f_{1})^{-1}=prox_{\lambda f_{1}}$ and by lemma 3.3, we can know
$prox_{\nu f_{1}}x=prox_{\mu
f_{1}}(\frac{\mu}{\nu}x+(1-\frac{\mu}{\nu})prox_{\nu f_{1}}x)$, so
\begin{align*}
\|prox_{\frac{\gamma_{n_{k}}}{\lambda_{n_{k}}}f_{1}}(z_{n_{k}})-prox_{\frac{\gamma}{\lambda}f_{1}}(z_{n_{k}})\|
&=\|prox_{\frac{\gamma_{n_{k}}}{\lambda_{n_{k}}}f_{1}}(z_{n_{k}})-prox_{\frac{\gamma}{\lambda}f_{1}}(z_{n_{k}})\|\\
&=\|prox_{\frac{\gamma}{\lambda}f_{1}}(\frac{\gamma}{\lambda}/\frac{\gamma_{n_{k}}}{\lambda_{n_{k}}}z_{n_{k}}+(1-\frac{\gamma}{\lambda}/\frac{\gamma_{n_{k}}}{\lambda_{n_{k}}})prox_{\frac{\gamma_{n_{k}}}{\lambda_{n_{k}}}f_{1}}z_{n_{k}})\\
&-prox_{\frac{\gamma}{\lambda}f_{1}}(z_{n_{k}})\|\\
&\leq\|\frac{\gamma}{\lambda}/\frac{\gamma_{n_{k}}}{\lambda_{n_{k}}}z_{n_{k}}+(1-\frac{\gamma}{\lambda}/\frac{\gamma_{n_{k}}}{\lambda_{n_{k}}})prox_{\frac{\gamma_{n_{k}}}{\lambda_{n_{k}}}f_{1}}z_{n_{k}}-z_{n_{k}}\|\\
&=|1-\frac{\gamma}{\lambda}/\frac{\gamma_{n_{k}}}{\lambda_{n_{k}}}|\|prox_{\frac{\gamma_{n_{k}}}{\lambda_{n_{k}}}f_{1}}z_{n_{k}}-z_{n_{k}}\|.\tag{3.16}
\end{align*}
On the other hand

\begin{align*}
&\|prox_{\frac{\gamma}{\lambda}f_{1}}(D(x_{n_{k}}-\gamma_{n_{k}}
\nabla f_{2}(x_{n_{k}}))
+(I-\lambda_{n_{k}}DD^{T})v_{n_{k}})\\
&-prox_{\frac{\gamma}{\lambda}f_{1}}(D(x_{n_{k}}-\gamma \nabla
f_{2}(x_{n_{k}}))+(I-\lambda
DD^{T})v_{n_{k}})\|\\
&\leq\|(\gamma-\gamma_{n_{k}})D\nabla
f_{2}(x_{n_{k}})+(\lambda-\lambda_{n_{k}})DD^{T}v_{n_{k}}\|\\
&\leq|\gamma-\gamma_{n_{k}}|\|D\nabla
f_{2}x_{n_{k}}\|+|\lambda-\lambda_{n_{k}}|\|DD^{T}v_{n_{k}}\|.\tag{3.17}
\end{align*}
Put (3.16) and (3.17) into (3.15), we can know
\begin{align*}
\|T_{1}^{n_{k}}u_{n_{k}}-T_{1}(u_{n_{k}})\|&\leq|1-\frac{\gamma}{\lambda}/\frac{\gamma_{n_{k}}}{\lambda_{n_{k}}}|\|prox_{\frac{\gamma_{n_{k}}}{\lambda_{n_{k}}}f_{1}}z_{n_{k}}-z_{n_{k}}\|\\
&+|\gamma-\gamma_{n_{k}}|\|D\nabla
f_{2}x_{n_{k}}\|+|\lambda-\lambda_{n_{k}}|\|DD^{T}v_{n_{k}}\|.\tag{3.18}
\end{align*}
Since $\gamma_{n_{k}}\rightarrow \gamma$ and
$\lambda_{n_{k}}\rightarrow \lambda$, from (3.18) we can know
$$\|T_{1}^{n_{k}}u_{n_{k}}-T_{1}(u_{n_{k}})\|\rightarrow0.\eqno{(3.19)}$$
 It follows from (3.11) that
\begin{align*}
\|T_{2}^{n_{k}}u_{n_{k}}-T_{2}(u_{n_{k}})\|&=\|x_{n_{k}}-\gamma_{n_{k}}
\nabla
f_{2}(x_{n_{k}})-\lambda_{n_{k}}D^{T}T_{1}^{n_{k}}\\
&-x_{n_{k}}-\gamma \nabla
f_{2}(x_{n_{k}})+\lambda D^{T}T_{1}\|\\
 &\leq|\gamma-\gamma_{n_{k}}|\|\nabla
f_{2}x_{n_{k}}\|+|\lambda-\lambda_{n_{k}}|\|D^{T}T_{1}^{n_{k}}\|\\
&+\|\lambda D^{T}\|\|T_{1}^{n_{k}}-T_{1}\|.\tag{3.20}
\end{align*}
Since $\gamma_{n_{k}}\rightarrow \gamma$ and
$\lambda_{n_{k}}\rightarrow \lambda$, from (3.19) we can know
$$\|T_{2}^{n_{k}}u_{n_{k}}-T_{2}(u_{n_{k}})\|\rightarrow0.\eqno{(3.21)}$$
Put (3.19) and (3.21) into (3.14), we can know
$$\|T^{n_{k}}u_{n_{k}}-T(u_{n_{k}})\|_{\lambda}^{2}\rightarrow0.\eqno{(3.22)}$$
Put  (3.22) into (3.13), we can know
$$\|u_{n_{k}}-T(u_{n_{k}})\|_{\lambda}\rightarrow0.\eqno{(3.23)}$$

\end{proof}

\begin{thm}
 Let $T^{n}$, $T$
be defined by 3.12, 3.5  respectively , suppose
$0<\liminf_{n\rightarrow\infty}\gamma_{n}\leq\limsup_{n\rightarrow\infty}\gamma_{n}<2\beta$,
$0<\liminf_{n\rightarrow\infty}\lambda_{n}\leq\limsup_{n\rightarrow\infty}\lambda_{n}\leq1/\lambda_{\max}(DD^{T})$,
 let $u_{n}$ be  sequence defined by PDFP$^{2}O_{DS_{n}}$, that is:
$$u_{n+1}=S^{n}(u_{n})=\alpha_{n}u_{n}+(1-\alpha_{n})T^{n}u_{n},\eqno{(3.24)}$$
where $\alpha_{n}$ satisfy
$$0<\liminf_{n\rightarrow\infty}\alpha_{n}\leq\limsup_{n\rightarrow\infty}\alpha_{n}<1.\eqno{(3.25)}$$
Then the sequence $\{u_{n}\}$ defined by (3.24) converges  to a fixed
point of $T$, and the sequence $\{x_{n}\}$ converges to a solution
of problem (1.1).

\end{thm}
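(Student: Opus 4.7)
The plan is to run the classical Mann-type convergence scheme with two twists that must be handled carefully: the underlying operator $T^{n}$ varies with $n$, and the nonexpansiveness guaranteed by Lemma 2.2 is measured in the moving norm $\|\cdot\|_{\lambda_{n}}$ rather than a single fixed norm. The first observation is that the characterization of fixed points in Lemma 3.8 is independent of the positive parameters $\gamma$ and $\lambda$, so every $\hat{u}=(\hat{v},\hat{x})$ associated with a solution $\hat{x}$ of (1.1) is a common fixed point of $T$ and of every $T^{n}$; the solvability assumption on (1.1) therefore gives a nonempty common fixed-point set.

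Next I would apply Lemma 3.4 in the $\|\cdot\|_{\lambda_{n}}$-norm to the recursion $u_{n+1}=\alpha_{n} u_{n}+(1-\alpha_{n}) T^{n} u_{n}$ to obtain
\begin{align*}
\|u_{n+1}-\hat{u}\|_{\lambda_{n}}^{2}
&=\alpha_{n}\|u_{n}-\hat{u}\|_{\lambda_{n}}^{2}+(1-\alpha_{n})\|T^{n}u_{n}-\hat{u}\|_{\lambda_{n}}^{2} \\
&\quad -\alpha_{n}(1-\alpha_{n})\|u_{n}-T^{n}u_{n}\|_{\lambda_{n}}^{2}.
\end{align*}
Since $T^{n}$ is nonexpansive under $\|\cdot\|_{\lambda_{n}}$ by Lemma 2.2 (with parameters $(\gamma_{n},\lambda_{n})$) and fixes $\hat{u}$, the middle term is dominated by $\|u_{n}-\hat{u}\|_{\lambda_{n}}^{2}$, yielding the quasi-Fejér inequality
\begin{equation*}
\|u_{n+1}-\hat{u}\|_{\lambda_{n}}^{2}\le \|u_{n}-\hat{u}\|_{\lambda_{n}}^{2}-\alpha_{n}(1-\alpha_{n})\|u_{n}-T^{n}u_{n}\|_{\lambda_{n}}^{2}.
\end{equation*}
Because $\lambda_{n}$ lies eventually in a compact subinterval of $(0,\infty)$, the norms $\|\cdot\|_{\lambda_{n}}$ are uniformly equivalent to the Euclidean norm; combining this with the inequality above one derives boundedness of $\{u_{n}\}$ and summability $\sum_{n}\alpha_{n}(1-\alpha_{n})\|u_{n}-T^{n}u_{n}\|_{\lambda_{n}}^{2}<\infty$, which together with $0<\liminf \alpha_{n}\le \limsup \alpha_{n}<1$ forces $\|u_{n}-T^{n}u_{n}\|\to 0$.

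At this point I would invoke Theorem 3.1 to pass from $T^{n}$ to $T$: after extracting a subsequence $\{u_{n_{k}}\}$ along which $\gamma_{n_{k}}\to\gamma$ and $\lambda_{n_{k}}\to\lambda$, one gets $\|u_{n_{k}}-T u_{n_{k}}\|\to 0$ for the fixed operator $T$ from (3.5) corresponding to these limits. Since $\{u_{n_{k}}\}$ lies in the finite-dimensional space $\mathbb{R}^{m}\times\mathbb{R}^{n}$, I would extract a further convergent subsequence with some limit $u^{\ast}$, and apply the demi-closedness of $I-T$ at zero (Lemma 3.5, with $T$ nonexpansive under $\|\cdot\|_{\lambda}$) to conclude $u^{\ast}\in\mathrm{Fix}(T)$. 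Lemma 3.8 then says the $x$-component of $u^{\ast}$ solves (1.1). Finally I would upgrade subsequential convergence to full-sequence convergence by re-applying the quasi-Fejér inequality with $\hat{u}=u^{\ast}$: the real sequence $\|u_{n}-u^{\ast}\|_{\lambda_{n}}^{2}$ is essentially nonincreasing, has a subsequence converging to $0$, hence converges to $0$, giving $u_{n}\to u^{\ast}$ and therefore $x_{n}\to x^{\ast}$.

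The main obstacle I anticipate is the book-keeping in that last ``essentially nonincreasing'' step, because the norm $\|\cdot\|_{\lambda_{n}}$ changes with $n$. One must either absorb the change-of-norm discrepancy $(\lambda_{n+1}-\lambda_{n})\|v_{n+1}-v^{\ast}\|^{2}$ into the Fejér inequality (using that the bounded $\{v_{n}\}$ keeps this term controlled) or first pass to a subsequence along which $\lambda_{n}$ converges, establish cluster-point uniqueness, and then conclude. Every other step is essentially automatic: the variable-operator issue has already been quarantined inside Theorem 3.1, and the rest is a standard demi-closedness plus Fejér monotonicity argument in a finite-dimensional setting.
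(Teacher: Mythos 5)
Your proposal follows the paper's own proof almost step for step: the same quasi-Fej\'{e}r inequality obtained from Lemma 3.4 together with nonexpansiveness of $T^{n}$ (that is the paper's Lemma 3.7 --- your citation ``Lemma 2.2'' does not exist), the same deduction of $\|u_{n}-T^{n}u_{n}\|\rightarrow 0$ from $0<\liminf\alpha_{n}\leq\limsup\alpha_{n}<1$, the same bridge from $T^{n}$ to the limit operator $T$ via the paper's Theorem 3.2 (which you call Theorem 3.1), demiclosedness of $I-T$ (Lemma 3.5), the fixed-point characterization (the paper's Lemma 3.6, not 3.8), and the final Fej\'{e}r upgrade of subsequential to full convergence. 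In that sense it is essentially the same argument, and your handling of the moving norm $\|\cdot\|_{\lambda_{n}}$ is in fact more candid than the paper's: the paper ``without loss of generality'' replaces $\lambda_{n}$ by a convergent subsequence and then asserts $\lambda_{n}\leq\lambda$ for large $n$ --- a reduction that is neither loss-free for the full sequence $u_{n}$ nor a valid consequence of $\lambda_{n}\rightarrow\lambda$ --- whereas you explicitly flag the change-of-norm discrepancy $(\lambda_{n+1}-\lambda_{n})\|v_{n+1}-v^{\ast}\|^{2}$ as the main obstacle and propose repairs.

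The one step in your write-up that would fail as stated is the opening claim that the fixed-point characterization is ``independent of the positive parameters $\gamma$ and $\lambda$,'' so that each $\hat{u}$ is automatically a common fixed point of all the $T^{n}$. It is not: unwinding Lemma 3.6 via Lemma 3.1, one finds $(\hat{v},\hat{x})\in \mathrm{Fix}(T)$ iff $\nabla f_{2}(\hat{x})+D^{T}w=0$ and $w\in\partial f_{1}(D\hat{x})$ with $\hat{v}=(\gamma/\lambda)w$, so the dual component scales with the ratio $\gamma/\lambda$ and $\mathrm{Fix}(T^{n})$ genuinely depends on $\gamma_{n}/\lambda_{n}$; when $D^{T}\hat{v}\neq 0$, a single $\hat{v}$ serves all $n$ only if the ratios are eventually constant. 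To be fair, the paper's proof leans on the very same unstated property --- inequality (3.27) begins with $\|T^{n}u_{n}-\hat{u}\|_{\lambda}=\|T^{n}u_{n}-T^{n}\hat{u}\|_{\lambda}$, i.e.\ it assumes $\hat{u}\in\mathrm{Fix}(T^{n})$ --- and the authors only make this explicit as the hypothesis $\mathrm{Fix}(T)=\bigcap_{n=1}^{\infty}\mathrm{Fix}(T^{n})$ in Theorem 3.5. So your attempt reproduces the paper's argument together with its gap; a fully rigorous version would need either $\lambda_{n}/\gamma_{n}$ eventually constant or that common-fixed-point condition added as a hypothesis, plus some control such as convergence of $\lambda_{n}$ or summability of $|\lambda_{n+1}-\lambda_{n}|$ to tame the norm drift you correctly identified.
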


\begin{proof}

Let $\hat{u}=(\hat{v},\hat{x})\in
\mathbb{R}^{m}\times\mathbb{R}^{n}$ be a fixed point of $T$. From
(3.24) and lemma 3.4, we have
\begin{align*}
\|u_{n+1}-\hat{u}\|_{\lambda}^{2}&=\|(1-\alpha_{n})u_{n}+\alpha_{n}T^{n}u_{n}-\hat{u}\|_{\lambda}^{2}\\
&=\alpha_{n}\|u_{n}-\hat{u}\|_{\lambda}^{2}+(1-\alpha_{n})\|T^{n}u_{n}-\hat{u}\|_{\lambda}^{2}-\alpha_{n}(1-\alpha_{n})\|u_{n}-T^{n}u_{n}\|_{\lambda}^{2}
.\tag{3.26}
\end{align*}
Since the sequence $\lambda_{n}$ is bounded, there exists a
convergent subsequence converges to $\lambda$,  without loss of
generality, we may assume that the convergent subsequence is
$\lambda_{n}$ itself, then we have $\lambda_{n}\rightarrow \lambda$.
That is, $\exists N_{0}\in N$
 such that  $\lambda_{n}\leq\lambda$. So by the similar  proof of  theorem 3.3 in
[1], for $\forall n\geq N_{0}$, we have\\
\begin{align*}
\|T^{n}u_{n}-\hat{u}\|_{\lambda}^{2}&=\|T^{n}u_{n}-T^{n}\hat{u}\|_{\lambda}^{2}\\
&\leq\|u_{n}-\hat{u}\|_{\lambda}^{2}+
\lambda_{n}\||v_{n}-\hat{v}\|^{2}+(\lambda-\lambda_{n})\|T^{n}_{1}(u_{n})-\hat{v}\|^{2}\\
&\leq\|u_{n}-\hat{u}\|_{\lambda}^{2}.\tag{3.27}
\end{align*}
Substituting (3.27) into (3.26), we obtain
\begin{align*}
\|u_{n+1}-\hat{u}\|_{\lambda}^{2}&\leq\|u_{n}-\hat{u}\|_{\lambda}^{2}-\alpha_{n}(1-\alpha_{n})\|u_{n}-T^{n}u_{n}\|_{\lambda}^{2}
.\tag{3.28}
\end{align*}
Which implies that
$$\|u_{n+1}-\hat{u}\|_{\lambda}\leq\|u_{n}-\hat{u}\|_{\lambda},$$
 this implies that sequence $u_{n}$ is a Fej\'{e}r  monotone
sequence, and
$\lim_{n\rightarrow\infty}\|u_{n+1}-\hat{u}\|_{\lambda}$ exists.

Since the sequence $\alpha_{n}$ satisfies (3.25), there exists
$\bar{a}, \underline{a}\in (0,1) $ such that
$\underline{a}<\alpha_{n} <\bar{a}.$ So by (3.28), we know
\begin{align*}
\underline{a}(1-\bar{a})\|u_{n}-T^{n}u_{n}\|_{\lambda}^{2}&\leq\alpha_{n}(1-\alpha_{n})\|u_{n}-T^{n}u_{n}\|_{\lambda}^{2}\\
&\leq\|u_{n}-\hat{u}\|_{\lambda}^{2}-\|u_{n+1}-\hat{u}\|_{\lambda}^{2},\tag{3.29}
\end{align*}
Let $n\rightarrow\infty$ in (3.29), we have
$$\|u_{n}-T^{n}u_{n}\|_{\lambda}\rightarrow0.\eqno{(3.30)}$$

Since the sequence $u_{n}$ is bounded and there exists a convergent
subsequence $u_{n_{j}}$ such that
$$u_{n_{j}}\rightarrow \tilde{u},\eqno{(3.31)}$$
for some $\tilde{u}\in \mathbb{R}^{m}\times\mathbb{R}^{n}$.\\
 From Theorem 3.2 and
(3.30), we have

$$\|u_{n_{j}}-Tu_{n_{j}}\|_{\lambda}\rightarrow0.$$ By Lemma
3.5, we know $\tilde{u}\in Fix(T)$. Moreover, we know that
$\|u_{n}-\hat{u}\|_{\lambda}$ is non-increasing for any fixed point
$\hat{u}$ of $T$. In particular, by choosing $\hat{u}=\tilde{u}$, we
have $\|u_{n}-\tilde{u}\|_{\lambda}$ is non-increasing. Combining
this and (3.31) yields
$$u_{n}\rightarrow \tilde{u}.$$

Writing $\tilde{u}=(\tilde{v},\tilde{x})$ with $\tilde{v}\in
\mathbb{R}^{m}, \tilde{x}\in\mathbb{R}^{n}$, we find from Lemma 3.6
that $\tilde{x}$ is the solution of problem (1.1).
\end{proof}
\subsection{Linear convergence rate for special cases}
In this section, we will give some stronger theoretical results
about the convergence rate in some special cases. For this, we
present the following condition.
\begin{Cond}
For any two real numbers $\lambda$ and $\gamma$ satisfying that
$0<\gamma<2\beta$ and $0<\lambda\leq1/\lambda_{\max}(DD^{T})$, there
exist $\mu, \nu\in[0,1)$ such that $\|I-\lambda
DD^{T}\|_{2}\leq\mu^{2}$ and
$$\|g(x)-g(y)\|_{2}\leq\nu\|x-y\|_{2}, ~~~~for~ all ~~x,y\in \mathbb{R}^{n}.$$

\end{Cond}
\begin{rmk}
If $D$ has full row rank, $f_{2}$ is strongly convex, i.e. there
exists some $\sigma > 0$ such that
$$\langle\nabla f_{2}(x)-\nabla f_{2}(y),x-y\rangle\geq\sigma\|x-y\|_{2}^{2}, ~~~~for~ all ~~x,y\in \mathbb{R}^{n},\eqno{(3.33)}$$
then this condition can be satisfied. In fact, when $D$ has a full
row rank, we can choose
$$\mu^{2}=1-\lambda\lambda_{\min}(DD^{T})$$
where $\lambda_{\min}(DD^{T})$ denotes the smallest eigenvalue of
$DD^{T}$ . In this case, $\mu^{2}$ takes its minimum
$$(\mu^{2})_{\min}=1-\frac{\lambda_{\min}(DD^{T})}{\lambda_{\max}(DD^{T})}$$
at $\lambda=1/\lambda_{\max}(DD^{T})$. On the other hand, since
$f_{2}$ have $1/\beta$-Lipschitz continuous gradient and is strongly
convex, it follows from proof in [1] we know
\begin{align*}
\|g(x)-g(y)\|_{2}^{2}
&\leq(1-(\frac{\gamma\sigma(2\beta-\gamma)}{\beta}))\|x-y\|_{2}^{2}.\tag{3.29}
\end{align*}
Hence we can choose
$$\nu^{2}=1-(\frac{\gamma\sigma(2\beta-\gamma)}{\beta}).$$
In particular, if we choose $\beta=\gamma$, then $\nu^{2}$ takes its
minimum in the present form:
$$\nu^{2}=1-\sigma\gamma.$$
\end{rmk}
Despite most of our interesting problems not belonging to these
special cases, and there will be more efficient algorithms if
condition 3.1 is satisfied, the following results still have some
theoretical values where the best performance of
PDFP$^{2}O_{DS_{n}}$ can be achieved. First of all, we show that $S$
is contractive under condition 3.1.

\begin{thm}
Assume condition 3.1 holds true. Let the operator $T$ be given in
(3.5) and $S=\alpha_{n}I+(1-\alpha_{n})T$ for
$0<\liminf_{n\rightarrow\infty}\alpha_{n}\leq\limsup_{n\rightarrow\infty}\alpha_{n}<1$.
Then $S$ is contractive under the norm $\|\cdot\|_{\lambda}$.
\end{thm}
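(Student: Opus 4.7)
The plan is to split the argument into two parts: first reduce the contractivity of $S$ to that of $T$, and then show that $T$ is strictly contractive under $\|\cdot\|_{\lambda}$ by refining the nonexpansiveness estimate behind Lemma 3.5.

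For the reduction, fix $\underline{a},\bar{a}\in(0,1)$ with $\underline{a}\leq\alpha_{n}\leq\bar{a}$. If $T$ satisfies $\|Tu-Tu'\|_{\lambda}\leq c\|u-u'\|_{\lambda}$ with $c<1$, the triangle inequality gives
$$\|Su-Su'\|_{\lambda}\leq\alpha_{n}\|u-u'\|_{\lambda}+(1-\alpha_{n})\|Tu-Tu'\|_{\lambda}\leq[1-(1-\bar{a})(1-c)]\|u-u'\|_{\lambda},$$
a uniform contraction. It therefore suffices to establish strict contractivity of $T$.

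Write $u=(v,x)$, $u'=(v',x')$, $\Delta g=g(x)-g(x')$, $\Delta v=v-v'$, $\Delta T_{1}=T_{1}(u)-T_{1}(u')$, and recall $M=I-\lambda DD^{T}$. From $T_{2}(u)-T_{2}(u')=\Delta g-\lambda D^{T}\Delta T_{1}$ one expands
$$\|T(u)-T(u')\|_{\lambda}^{2}=\|\Delta g\|^{2}-2\lambda\langle D\Delta g,\Delta T_{1}\rangle+\lambda^{2}\|D^{T}\Delta T_{1}\|^{2}+\lambda\|\Delta T_{1}\|^{2}.$$
Applying firm nonexpansiveness of $I-prox_{(\gamma/\lambda)f_{1}}$ (Lemma 3.2) at the points $Dg(x)+Mv$ and $Dg(x')+Mv'$ gives $\|\Delta T_{1}\|^{2}\leq\langle\Delta T_{1},D\Delta g\rangle+\langle\Delta T_{1},M\Delta v\rangle$, which dominates the cross term $-2\lambda\langle D\Delta g,\Delta T_{1}\rangle$. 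Substituting and using $\lambda\|\Delta T_{1}\|^{2}-\lambda^{2}\|D^{T}\Delta T_{1}\|^{2}=\lambda\|\Delta T_{1}\|_{M}^{2}$, the surviving terms amount to $-\lambda\|\Delta T_{1}\|_{M}^{2}+2\lambda\langle\Delta T_{1},M\Delta v\rangle$; completing the square in the $M$-seminorm rewrites this as $\lambda\|\Delta v\|_{M}^{2}-\lambda\|\Delta T_{1}-\Delta v\|_{M}^{2}$ and yields the key bound
$$\|T(u)-T(u')\|_{\lambda}^{2}\leq\|\Delta g\|^{2}+\lambda\|\Delta v\|_{M}^{2}.$$

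Now invoke Condition 3.1. Since $\|\Delta g\|\leq\nu\|x-x'\|$, and since $M$ is symmetric positive semi-definite with $\|M\|_{2}\leq\mu^{2}$ so that $\|\Delta v\|_{M}^{2}=\langle M\Delta v,\Delta v\rangle\leq\mu^{2}\|\Delta v\|^{2}$, one obtains
$$\|T(u)-T(u')\|_{\lambda}^{2}\leq\nu^{2}\|x-x'\|^{2}+\lambda\mu^{2}\|v-v'\|^{2}\leq\max(\nu^{2},\mu^{2})\|u-u'\|_{\lambda}^{2}.$$
Thus $T$ is $\max(\nu,\mu)$-contractive with $\max(\nu,\mu)<1$, and the reduction step closes the argument. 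The main obstacle is deriving the key bound: the completion-of-square step in the $M$-seminorm is the decisive algebraic move, since it converts the dual-coupling cross term into a controllable $\lambda\|\Delta v\|_{M}^{2}$ contribution, exactly matching the two parts of the decomposition forced by the definition of $\|\cdot\|_{\lambda}$.
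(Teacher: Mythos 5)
Your proposal is correct and follows essentially the same route as the paper: both reduce contractivity of $S$ to that of $T$ via the triangle inequality with factor $\alpha_{n}+(1-\alpha_{n})\eta$, $\eta=\max\{\mu,\nu\}<1$. The only difference is that the paper simply cites the proof of Theorem 3.6 of [1] for the key estimate $\|T(u_{1})-T(u_{2})\|_{\lambda}\leq\eta\|u_{1}-u_{2}\|_{\lambda}$, whereas you derive it in full (correctly, via firm nonexpansiveness of $I-prox_{\frac{\gamma}{\lambda}f_{1}}$ and completing the square in the $M$-seminorm), which is precisely the argument of the cited reference — and your explicit uniform bound $1-(1-\bar{a})(1-c)$ is in fact slightly more careful than the paper's per-$n$ constant $\theta_{\alpha_{n}}$.
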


\begin{proof}
Let $\eta=\max\{\mu,\nu\}$. It is clear that $0 \leq\eta\leq 1$.
Then, owing to the condition 3.1 and the proof of Theorem 3.6 of
[1], for all $u_{1}=(v_{1},x_{1}),
u_{2}=(v_{2},x_{2})\in\mathbb{R}^{m}\times\mathbb{R}^{n}$, there
holds
$$\|T(u_{1})-T(u_{2})\|_{\lambda}\leq\eta\|u_{1}-u_{2}\|_{\lambda},$$
then
$$\|S(u_{1})-S(u_{2})\|_{\lambda}\leq\alpha_{n}\|u_{1}-u_{2}\|_{\lambda}+(1-\alpha_{n})\|T(u_{1})-T(u_{2})\|_{\lambda}\leq\theta_{\alpha_{n}}\|u_{1}-u_{2}\|_{\lambda},$$
with $\theta_{\alpha_{n}}=\alpha_{n}+(1-\alpha_{n})\eta\in(0,1)$.
So, operator $S$ is contractive. By the Banach contraction mapping
theorem, it has a unique fixed point, denoted by
$\bar{u}=(\bar{v},\bar{x})$. It is obvious that $S$ has the same
fixed points as $T$, so $\bar{x}$ is the unique solution of problem
(1.1) from lemma 3.6.

\end{proof}

 Now, we are ready to analyze the convergence rate of PDFP$^{2}O_{DS_{n}}$ .

\begin{thm}
Assume condition 3.1 holds true. Let the operator $T$ be given in
(3.5) and $T^{n}$ be defined as 3.12 with $\emptyset\neq
Fix(T)=\bigcap_{n=1}^{\infty}Fix(T^{n})$. For any
$u_{0}\in\mathbb{R}^{m}\times\mathbb{R}^{n}$, the sequence $u_{n}$
be a sequence obtained by algorithm PDFP$^{2}O_{DS_{n}}$, and
$0<\liminf_{n\rightarrow\infty}\alpha_{n}\leq\limsup_{n\rightarrow\infty}\alpha_{n}<1$.
Then the sequence $\{u_{n}\}$ must converge to the unique fixed
point
$\bar{u}=(\bar{v},\bar{x})\in\mathbb{R}^{m}\times\mathbb{R}^{n}$ of
$T$ with $\bar{x}$ being the unique solution of problem (1.1).
Furthermore, there holds the estimate
$$\|x_{n}-\bar{x}\|_{2}\leq \frac{d(\theta_{\alpha_{n}})^{n}}{1-\theta_{\alpha_{n}}},\eqno{(3.34)}$$
where $d=\|u_{1}-u_{0}\|_{\lambda}$,
$\theta_{\alpha_{n}}=\alpha_{n}+(1-\alpha_{n})\eta\in(0,1) $ and
$\eta=\max\{\mu,\nu\}$ with $\mu$ and $\nu$ given in condition 3.1.

\end{thm}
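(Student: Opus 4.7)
The plan is to reduce the analysis of PDFP$^2O_{DS_n}$ to a Banach-contraction argument applied to the inhomogeneous iteration $u_{n+1}=S^n u_n$, where $S^n=\alpha_n I+(1-\alpha_n)T^n$ is the fixed-point reformulation supplied by the first theorem of this section. The key ingredients I need are: a uniform contraction factor for the family $\{S^n\}$, a common fixed point $\bar u$ for every $S^n$, and then a standard telescoping estimate.

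First I would establish a uniform contraction constant for $\{T^n\}$. Under Condition 3.1, repeating for each pair $(\gamma_n,\lambda_n)$ the argument used in the preceding theorem shows that $T^n$ is contractive in $\|\cdot\|_\lambda$ with factor $\eta_n=\max\{\mu_n,\nu_n\}<1$. Since the standing hypotheses force $\gamma_n$ and $\lambda_n$ to lie in compact subintervals of $(0,2\beta)$ and $(0,1/\lambda_{\max}(DD^T)]$, the constants provided by Condition 3.1 can be majorised by a single pair $\mu,\nu\in[0,1)$, giving a uniform $\eta=\max\{\mu,\nu\}<1$ valid for every $n$. Consequently $S^n$ is contractive with factor $\theta_{\alpha_n}=\alpha_n+(1-\alpha_n)\eta\in(0,1)$, and because $\limsup_n\alpha_n<1$ there is a uniform upper bound $\theta:=\sup_n\theta_{\alpha_n}<1$.

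Next I would identify a common fixed point and run the iteration. By the preceding theorem, $T$ has a unique fixed point $\bar u=(\bar v,\bar x)$, and Lemma 3.6 guarantees that $\bar x$ is the unique minimiser of (1.1). The hypothesis $Fix(T)=\bigcap_n Fix(T^n)$ then places $\bar u\in Fix(T^n)$ for every $n$, so $S^n\bar u=\bar u$. Applying the contraction inequality to $u_{n+1}=S^n u_n$ and $\bar u=S^n\bar u$ yields $\|u_{n+1}-\bar u\|_\lambda\le\theta\|u_n-\bar u\|_\lambda$, whence $u_n\to\bar u$ geometrically. For the explicit bound (3.34) I would then invoke the classical telescoping estimate: $\|u_{k+1}-u_k\|_\lambda\le\theta\|u_k-u_{k-1}\|_\lambda$ iterates to $\|u_{k+1}-u_k\|_\lambda\le\theta^k d$ with $d=\|u_1-u_0\|_\lambda$, so summing the geometric tail and passing to the limit yields $\|u_n-\bar u\|_\lambda\le\theta^n d/(1-\theta)$. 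Since $\|x_n-\bar x\|_2\le\|u_n-\bar u\|_\lambda$ by the definition (3.9), this delivers (3.34) with $\theta_{\alpha_n}$ in the statement interpreted as this uniform supremum $\theta$.

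The main obstacle is the first step, namely extracting a single contraction constant $\eta<1$ from Condition 3.1 when the stepsizes vary with $n$; once this uniformity is secured, the remainder is essentially the standard Banach fixed-point estimate transported across the varying operators $S^n$.
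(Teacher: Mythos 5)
Your proposal is correct in substance and follows the same skeleton as the paper's proof---a contraction factor $\theta_{\alpha_{n}}=\alpha_{n}+(1-\alpha_{n})\eta$, a geometric bound on the iterates, and the comparison $\|x_{n}-\bar{x}\|_{2}\leq\|u_{n}-\bar{u}\|_{\lambda}$ coming from the definition (3.9)---but it deviates in two instructive ways. First, the paper gets convergence $u_{n}\rightarrow\bar{u}$ by citing Theorem 3.3 (the Fej\'{e}r-monotone Mann argument) and only afterwards extracts the rate via telescoping successive differences and letting $l\rightarrow+\infty$; you instead exploit the hypothesis $Fix(T)=\bigcap_{n=1}^{\infty}Fix(T^{n})$ to get $S^{n}\bar{u}=\bar{u}$ and derive $\|u_{n+1}-\bar{u}\|_{\lambda}\leq\theta\|u_{n}-\bar{u}\|_{\lambda}$ directly, which produces convergence and a rate in one stroke without invoking the general convergence theorem. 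Second, you explicitly uniformize the contraction constant over $n$, a point the paper glosses over: Theorem 3.4 is proved only for the operator $T$ with a fixed pair $(\gamma,\lambda)$, yet the paper's chain $\|u_{n+1}-u_{n}\|_{\lambda}\leq\theta_{\alpha_{n}}\|u_{n}-u_{n-1}\|_{\lambda}\leq\cdots\leq(\theta_{\alpha_{n}})^{n}d$ silently treats the $n$-dependent factor as if it were constant across all steps. Your compactness argument is the right repair, with the small caveat that Condition 3.1 only furnishes $\mu,\nu$ pointwise in $(\gamma,\lambda)$, so passing to a uniform $\eta<1$ needs continuity or monotonicity of these constants in the stepsizes (readily checked in the setting of Remark 3.2, where $\mu^{2}=1-\lambda\lambda_{\min}(DD^{T})$ and $\nu^{2}=1-\gamma\sigma(2\beta-\gamma)/\beta$).

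One genuine soft spot, which you share with the paper rather than introduce: the telescoping inequality $\|u_{k+1}-u_{k}\|_{\lambda}\leq\theta\|u_{k}-u_{k-1}\|_{\lambda}$ is not justified for the inhomogeneous iteration, since $u_{k+1}-u_{k}=S^{k}u_{k}-S^{k-1}u_{k-1}$ involves two different operators, and contraction of $S^{k}$ alone only yields $\theta\|u_{k}-u_{k-1}\|_{\lambda}+\|S^{k}u_{k-1}-S^{k-1}u_{k-1}\|_{\lambda}$, where the second term need not vanish when $\gamma_{n},\lambda_{n},\alpha_{n}$ vary. Fortunately your own setup makes this step unnecessary: from $\|u_{n}-\bar{u}\|_{\lambda}\leq\theta^{n}\|u_{0}-\bar{u}\|_{\lambda}$ together with $\|u_{0}-\bar{u}\|_{\lambda}\leq\|u_{1}-u_{0}\|_{\lambda}+\theta\|u_{0}-\bar{u}\|_{\lambda}$ you obtain $\|u_{0}-\bar{u}\|_{\lambda}\leq d/(1-\theta)$ and hence $\|x_{n}-\bar{x}\|_{2}\leq d\theta^{n}/(1-\theta)$, which is exactly (3.34) with the uniform $\theta$ and is in fact a cleaner derivation than the one in the paper.
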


\begin{proof}
From Theorem 3.3, we can know that the sequence $\{u_{n}\}$
converges to $\bar{u}$. On the other hand, it follows from theorem
3.4 that
$$\|u_{n+1}-u_{n}\|_{\lambda}\leq\theta_{\alpha_{n}}\|u_{n}-u_{n-1}\|_{\lambda}\leq\cdots\leq(\theta_{\alpha_{n}})^{n}\|u_{1}-u_{0}\|_{\lambda}=d(\theta_{\alpha_{n}})^{n}.$$
So for all $0 < l \in \mathbb{N} $,
$$\|u_{n+l}-u_{n}\|_{\lambda}\leq\sum_{i=1}^{l}\|u_{n+i}-u_{n+i-1}\|_{\lambda}=d(\theta_{\alpha_{n}})^{n}\sum_{i=1}^{l}(\theta_{\alpha_{n}})^{i-1}\leq \frac{d(\theta_{\alpha_{n}})^{n}}{1-\theta_{\alpha_{n}}},$$
which immediately implies
$$\|x_{n}-\bar{x}\|_{2}\leq\|u_{n}-\bar{u}\|_{\lambda}\leq \frac{d(\theta_{\alpha_{n}})^{n}}{1-\theta_{\alpha_{n}}},$$
by letting $l \rightarrow+\infty$. The desired estimate (3.29) is
then obtained.
\end{proof}

\begin{rmk}
Since sequence $\alpha_{n}$ satisfy
$0<\liminf_{n\rightarrow\infty}\alpha_{n}\leq\limsup_{n\rightarrow\infty}\alpha_{n}<1$,
then exists $\underline{a}, \bar{a}\in(0,1)$ such that
$\underline{a}< \alpha_{n}<\bar{a}$. So we have
$\underline{a}+(1-\bar{a})\eta<\alpha_{n}+(1-\alpha_{n})\eta$.\\
 In
particular, if we choose
$\theta_{\alpha_{n}}=\underline{a}+(1-\bar{a})\eta=\theta_{a}$, then
we obtain
$$\|x_{n}-\bar{x}\|_{2}\leq \frac{d(\theta_{a})^{n}}{1-\theta_{a}}.\eqno{(3.35)}$$
It will follow that our scheme shows an
$o(\frac{d(\theta_{a})^{n}}{1-\theta_{a}})$ convergence to the
optimum for the variable $x_{n}$, which is an optimal rate.

\end{rmk}

\section{Connections to other algorithms}

We will further investigate the proposed algorithm PDFP$^{2}O_{DS}$
from the perspective of primal-dual forms and establish the
connections to other existing methods.
\subsection{Primal-dual and proximal point algorithms}
For problem (1.1), we can write its primal-dual form using the
Fenchel duality [18] as
$$\min_{x}\max_{y}G(x,v):=\langle Dx,v\rangle-f_{1}^{\ast}(v)+f_{2}(x),\eqno{(4.1)}$$
where $f_{1}^{\ast}$ is the convex conjugate function of $f_{1}$
defined by
$$f_{1}^{\ast}(v)=\sup_{w\in\mathbb{R}^{m}}\langle v, w\rangle-f_{1}(v).$$
\par
By introducing a new intermediate variable $y_{n+1}$, equations
(2.2) are reformulated as
$$
\left\{
\begin{array}{l}
y_{n+1}=x_{n}-\gamma_{n}\nabla f_{2}(x_{n})-\lambda_{n}D^{T}v_{n},~~(4.2a)\\
v_{n+1}=(I-prox_{\frac{\gamma_{n}}{\lambda_{n}}f_{1}})(Dy_{n+1}+v_{n}),~(4.2b)\\
x_{n+1}=x_{n}-\gamma_{n}\nabla
f_{2}(x_{n})-\lambda_{n}D^{T}v_{n+1}.(4.2c)
\end{array}
\right.
$$
According to Moreau decomposition (see equation (2.21) in [2]), for
all $v \in \in\mathbb{R}^{m}$, we have
$$v=v_{\frac{\gamma_{n}}{\lambda_{n}}}^{\oplus}+v_{\frac{\gamma_{n}}{\lambda_{n}}}^{\ominus},$$
where
$v_{\frac{\gamma_{n}}{\lambda_{n}}}^{\oplus}=prox_{\frac{\gamma_{n}}{\lambda_{n}}f_{1}}v$,
$v_{\frac{\gamma_{n}}{\lambda_{n}}}^{\ominus}=\frac{\gamma_{n}}{\lambda_{n}}prox_{\frac{\gamma_{n}}{\lambda_{n}}f_{1}^{\ast}}(\frac{\lambda_{n}}{\gamma_{n}}v)$,
from which we know
$$(I-prox_{\frac{\gamma_{n}}{\lambda_{n}}f_{1}})(Dy_{n+1}+v_{n})
=\frac{\gamma_{n}}{\lambda_{n}}prox_{\frac{\gamma_{n}}{\lambda_{n}}f_{1}^{\ast}}(\frac{\lambda_{n}}{\gamma_{n}}Dy_{n+1}+\frac{\lambda_{n}}{\gamma_{n}}v_{n}).$$
Let  $\bar{v}_{n}=\frac{\lambda_{n}}{\gamma_{n}}v_{n}$. Then (4.2)
can be reformulated as
$$
\left\{
\begin{array}{l}
y_{n+1}=x_{n}-\gamma_{n}\nabla f_{2}(x_{n})-\gamma_{n}D^{T}\bar{v}_{n},~~~(4.3a)\\
v_{n+1}=prox_{\frac{\gamma_{n}}{\lambda_{n}}f_{1}^{\ast}}(\frac{\lambda_{n}}{\gamma_{n}}Dy_{n+1}+\bar{v}_{n}),~~~~~~(4.3b)\\
x_{n+1}=x_{n}-\gamma_{n}\nabla
f_{2}(x_{n})-\gamma_{n}D^{T}\bar{v}_{n+1}.~(4.3c)
\end{array}
\right.
$$

For terms of the saddle point formulation (4.1),  with the same idea
in [1](4.1 Primal-dual and proximal point algorithms), the
iterations (4.3) can be expressed as

$$
\left\{
\begin{array}{l}
\bar{v}_{n+1}=\arg\max_{\bar{v}\in\mathbb{R}^{m}}G(x_{n+1},\bar{v})-\frac{\gamma_{n}}{2\lambda_{n}}\|\bar{v}-\bar{v}_{n}\|^{2}_{M_{n}},(4.4a)\\
x_{n+1}=x_{n}-\gamma_{n}\nabla_{x}G(x_{n},\bar{v}_{n+1}),~~~~~~~~~~~~~~~~~~~~~~~~(4.4b)
\end{array}
\right.
$$
where $M_{n}=I-\lambda_{n}DD^{T}$.

Table 1.  Comparison between CP ($\theta_{n} = 1$) and
PDFP$^{2}O_{DS}$.

\begin{tabular}{llll}
\\
\hline\noalign{\smallskip}
\emph{} &\emph{CP($\theta_{n}=1$)} &   & \\
\noalign{\smallskip}\hline\noalign{\smallskip} Form &
$\bar{v}_{n+1}=(I+\sigma_{n}\partial
f_{1}^{\ast})^{-1}(\bar{v}_{n}+\sigma_{n}
Dy_{n+1})$ &\\
     & $x_{n+1}=(I+\tau_{n}\nabla f_{2})^{-1}(x_{n}-\tau_{n}D^{T}\bar{v}_{n+1})$&\\
     & $y_{n+1}=2x_{n+1}-x_{n}$&\\
Convergence & $0<\liminf_{n\rightarrow\infty}\sigma_{n}\tau_{n}\leq\limsup_{n\rightarrow\infty}\sigma_{n}\tau_{n}<1/\lambda_{\max}(DD^{T})$ &  \\
\noalign{\smallskip}\hline
\emph{} &\emph{PDFP$^{2}O_{DS}$} &   & \\
\noalign{\smallskip}\hline\noalign{\smallskip} Form &
$\bar{v}_{n+1}=(I+\frac{\lambda_{n}}{\gamma_{n}}\partial
f_{1}^{\ast})^{-1}(\bar{v}_{n}+\frac{\lambda_{n}}{\gamma_{n}}
Dy_{n+1})$ &\\
     & $x_{n+1}=x_{n}-\gamma_{n}\nabla f_{2}(x_{n})-\gamma_{n}D^{T}\bar{v}_{n+1}$&\\
     & $y_{n+1}=x_{n+1}-\gamma_{n}\nabla f_{2}(x_{n+1})-\gamma_{n}D^{T}\bar{v}_{n+1}$&\\
Convergence &
$0<\liminf_{n\rightarrow\infty}\gamma_{n}\leq\limsup_{n\rightarrow\infty}\gamma_{n}<2\beta$&  \\
 &      $0<\liminf_{n\rightarrow\infty}\lambda_{n}\leq\limsup_{n\rightarrow\infty}\lambda_{n}\leq1/\lambda_{\max}(DD^{T})$ &  \\
Relation & $\sigma_{n}=\frac{\lambda_{n}}{\gamma_{n}}$, $\tau_{n}=\gamma_{n}$\\
\noalign{\smallskip}\hline
\end{tabular}\\
\par
This leads to a close connection with a class of primal-dual method
studied in [19-22]. For example, in [19], Chambolle and Pock
proposed the following scheme for solving (4.1):\\
$$
\left\{
\begin{array}{l}
\bar{v}_{n+1}=(I+\sigma_{n}\partial
f_{1}^{\ast})^{-1}(\bar{v}_{n}+\sigma_{n}
Dy_{n+1}),(4.5a)\\
x_{n+1}=(I+\tau_{n}\nabla f_{2})^{-1}(x_{n}-\tau_{n}D^{T}\bar{v}_{n+1}),(4.5b)\\
y_{n+1}=\theta_{n}x_{n+1}-x_{n},(4.5c)
\end{array}
\right.
$$
where $\sigma_{0}, \tau_{0} > 0$, $\theta_{n} \in [0, 1]$ is a
variable relaxation parameter. For $\sigma_{n}=\sigma$,
$\tau_{n}=\tau$ and  $\theta_{n}\equiv0$, we can obtain the
classical Arrow-Hurwicz-Uzawa (AHU) method in [23]. The convergence
of AHU with very small step length is shown in [20]. Under some
assumptions on $f_{1}$ or strong convexity of $f_{2}$, global
convergence of the primal-dual gap can also be shown with specific
chosen adaptive steplength [19].
\par
According to equation (4.3), using the relation
$prox_{\frac{\gamma_{n}}{\lambda_{n}}f_{1}^{\ast}} =
(I+\frac{\lambda_{n}}{\gamma_{n}}\partial f_{1}^{\ast})^{-1}$, and
changing the order of these equations, we know that PDFP$^{2}O_{DS}$
is equivalent to
$$
\left\{
\begin{array}{l}
\bar{v}_{n+1}=(I+\frac{\lambda_{n}}{\gamma_{n}}\partial
f_{1}^{\ast})^{-1}(\bar{v}_{n}+\frac{\lambda_{n}}{\gamma_{n}}
Dy_{n}),~~~~~~(4.6a)\\
x_{n+1}=x_{n}-\gamma_{n}\nabla f_{2}(x_{n})-\gamma_{n}D^{T}\bar{v}_{n+1},~~~~~(4.6b)\\
y_{n+1}=x_{n+1}-\gamma_{n}\nabla
f_{2}(x_{n+1})-\gamma_{n}D^{T}\bar{v}_{n+1}.(4.6c)
\end{array}
\right.
$$
Let $\sigma_{n}=\frac{\lambda_{n}}{\gamma_{n}}$,
$\tau_{n}=\gamma_{n}$ $(n\in\mathbb{N})$, then we can see that
equations (4.5b) and (4.5c) are approximated by two explicit steps
(4.6b)-(4.6c). In summary, we list the comparisons of CP for
$\theta_{n}\equiv1$ with the fixed step length and PDFP$^{2}O_{DS}$
in table 1.
\subsection{Splitting type of methods}
\par
There are other types of methods which are designed to solve problem
(1.1) based on the notion of an augmented Lagrangian. For
simplicity, we only study the connections and differences in
alternating split Bregman (ASB), split inexact Uzawa (SIU) and
PDFP$^{2}O_{DS}$, for $f_{2}(x)=\frac{1}{2}\|Ax-b\|^{2}_{2}$.
\par
ASB present by Goldstein and Osher [24] can be described as follows:
$$
\left\{
\begin{array}{l}
x_{n+1}=(A^{T}A+\nu_{n}D^{T}D)^{-1}(A^{T}b+\nu_{n}D^{T}(d_{n}-v_{n})),(4.7a)\\
d_{n+1}=prox_{\frac{1}{\nu_{n}}f_{1}}(Dx_{n+1}+v_{n}),~~~~~~~~~~~~~~~~~~~~~~~~~~~(4.7b)\\
v_{n+1}=v_{n}-(d_{n+1}-Dx_{n+1}),~~~~~~~~~~~~~~~~~~~~~~~~~~~~~(4.7c)
\end{array}
\right.
$$
where $\liminf_{n\rightarrow\infty}\nu_{n} > 0$ is a dynamic
parameter. The explicit SIU method proposed in the literature [22]
can be described as
$$
\left\{
\begin{array}{l}
x_{n+1}=x_{n}-\delta_{n}A^{T}(Ax_{n}-b)-\delta_{n}\nu_{n}D^{T}(Dx_{n}-d_{n}+v_{n}),(4.8a)\\
d_{n+1}=prox_{\frac{1}{\nu_{n}}f_{1}}(Dx_{n+1}+v_{n}),~~~~~~~~~~~~~~~~~~~~~~~~~~~~~~~~~(4.8b)\\
v_{n+1}=v_{n}-(d_{n+1}-Dx_{n+1}),~~~~~~~~~~~~~~~~~~~~~~~~~~~~~~~~~~~(4.8c)
\end{array}
\right.
$$
where $\liminf_{n\rightarrow\infty}\delta_{n} > 0$ is a  dynamic
parameter.
\par
From (4.2a) and (4.2c), we can find out a relation between $y_{n}$
and $x_{n}$, given by
 $$x_{n} = y_{n}-\lambda_{n}D^{T} (v_{n}-v_{n+1}). $$
Then eliminating $x_{n}$, PDFP$^{2}O_{DS}$ can be expressed as
$$
\left\{
\begin{array}{l}
y_{n+1}=y_{n}-\lambda_{n}D^{T}(2v_{n}-v_{n-1})-\gamma_{n}\nabla f_{2}(y_{n}-\lambda_{n}D^{T}(v_{n}-v_{n-1})),(4.9a)\\
v_{n+1}=(I-prox_{\frac{\gamma_{n}}{\lambda_{n}}f_{1}})(Dy_{n+1}+v_{n}).~~~~~~~~~~~~~~~~~~~~~~~~~~~~~~~~~~~~~~(4.9b)
\end{array}
\right.
$$
By introducing the splitting variable $d_{n+1}$ in (4.9b), (4.9) can
be further expressed as

 Table 2 The comparisons among ASB, SIU and PDFP$^{2}O_{DS}$.

\begin{tabular}{llll}
\\
\hline\noalign{\smallskip}
\emph{} &\emph{ASB} &   & \\
\noalign{\smallskip}\hline\noalign{\smallskip} Form &
$x_{n+1}=(A^{T}A+\nu_{n}D^{T}D)^{-1}(A^{T}b+\nu_{n}D^{T}(d_{n}-v_{n}))$ &\\
     & $d_{n+1}=prox_{\frac{1}{\nu_{n}}f_{1}}(Dx_{n+1}+v_{n})$&\\
     & $v_{n+1}=v_{n}-(d_{n+1}-Dx_{n+1})$&\\
Convergence & $\liminf_{n\rightarrow\infty}\nu_{n}>0$ &  \\
\noalign{\smallskip}\hline
\emph{} &\emph{SIU} &   & \\
\noalign{\smallskip}\hline\noalign{\smallskip} Form &
$x_{n+1}=x_{n}-\delta_{n}A^{T}(Ax_{n}-b)-\delta_{n}\nu_{n}D^{T}(Dx_{n}-d_{n}+v_{n})$ &\\
     & $d_{n+1}=prox_{\frac{1}{\nu_{n}}f_{1}}(Dx_{n+1}+v_{n})$&\\
     & $v_{n+1}=v_{n}-(d_{n+1}-Dx_{n+1})$&\\
Convergence &
$\liminf_{n\rightarrow\infty}\nu_{n}>0$&  \\
 &      $0<\liminf_{n\rightarrow\infty}\delta_{n}\leq\limsup_{n\rightarrow\infty}\delta_{n}\leq1/\lambda_{\max}(A^{T}A+DD^{T})$ &  \\
 \noalign{\smallskip}\hline
\emph{} &\emph{PDFP$^{2}O_{DS}$} &   & \\
\noalign{\smallskip}\hline\noalign{\smallskip} Form
&$x_{n+1}=x_{n}-\delta_{n}A^{T}(Ax_{n}-b)-\delta_{n}\nu_{n}D^{T}(Dx_{n}-d_{n}+v_{n})$\\
& $-\delta_{n}^{2}\nu_{n}A^{T}AD^{T}(d_{n}-Dx_{n})$ \\
     & $d_{n+1}=prox_{\frac{1}{\nu_{n}}f_{1}}(Dx_{n+1}+v_{n})$&\\
     & $v_{n+1}=v_{n}-(d_{n+1}-Dx_{n+1})$&\\
Convergence &
$0<\liminf_{n\rightarrow\infty}\delta_{n}\leq\limsup_{n\rightarrow\infty}\delta_{n}<2/\lambda_{\max}(A^{T}A)$&  \\
 &      $0<\liminf_{n\rightarrow\infty}\delta_{n}\nu_{n}\leq\limsup_{n\rightarrow\infty}\delta_{n}\nu_{n}\leq1/\lambda_{\max}(DD^{T})$ &  \\
\noalign{\smallskip}\hline
\end{tabular}\\

$$
\left\{
\begin{array}{l}
y_{n+1}=y_{n}-\lambda_{n}D^{T}(Dy_{n}-d_{n}+v_{n})-\gamma_{n}\nabla f_{2}(y_{n}-\lambda_{n}D^{T}(Dy_{n}-d_{n})),(4.10a)\\
d_{n+1}=prox_{\frac{1}{\nu_{n}}f_{1}}(Dy_{n+1}+v_{n}),~~~~~~~~~~~~~~~~~~~~~~~~~~~~~~~~~~~~~~~~~~~~~~~~~~(4.10b)\\
v_{n+1}=v_{n}-(d_{n+1}-Dy_{n+1}).~~~~~~~~~~~~~~~~~~~~~~~~~~~~~~~~~~~~~~~~~~~~~~~~~~~~(4.10c)
\end{array}
\right.
$$
For $f_{2}(x) = \frac{1}{2}\|Ax-b\|^{2}_{2}$ , $\nabla f_{2}(x) =
A^{T} (Ax-b)$. By changing the order and letting
$\gamma_{n}=\delta_{n}$, $\lambda_{n}=\delta_{n}\nu_{n}(\forall
n\in\mathbb{N})$, (4.10) becomes
$$
\left\{
\begin{array}{l}
y_{n+1}=y_{n}-\delta_{n}A^{T}(Ay_{n}-b)-\delta_{n}\nu_{n}D^{T}(Dy_{n}-d_{n}+v_{n})\\
-\delta_{n}^{2}\nu_{n}A^{T}AD^{T}(d_{n}-By_{n}),~~~~~~~~~~~~~~~~~~~~~~~~~~~~~~~~~~~~~~~~~~~~~~~~~~~~~~(4.11a)\\
d_{n+1}=prox_{\frac{1}{\nu_{n}}f_{1}}(Dy_{n+1}+v_{n}),~~~~~~~~~~~~~~~~~~~~~~~~~~~~~~~~~~~~~~~~~~~~~~~~~(4.11b)\\
v_{n+1}=v_{n}-(d_{n+1}-Dy_{n+1}).~~~~~~~~~~~~~~~~~~~~~~~~~~~~~~~~~~~~~~~~~~~~~~~~~~~(4.11c)
\end{array}
\right.
$$
We can easily see that equation (4.7a) in ASB is approximated by
(4.10a). Although it seems that PDFP$^{2}O_{DS}$ requires more
computation in (4.10a) than SIU in (4.8a), PDFP$^{2}O_{DS}$ has the
same computation cost as that of SIU if the iterations are
implemented cleverly. For the reason of comparison, we can change
the variable $y_{n}$ to $x_{n}$ in (4.10). Table 2 gives the
summarized comparisons among ASB, SIU and PDFP$^{2}O_{DS}$. We note
that the only difference of SIU and PDFP$^{2}O_{DS}$ is in the first
step. As two algorithms converge, the algorithm  PDFP$^{2}O_{DS}$
behaves asymptotically the same as SIU since $d_{n}-Dx_{n}$
converges to 0. The parameters $\delta_{n}$ and $\nu_{n}$ satisfy
respectively different conditions to ensure the convergence.

\section{Numerical experiments}

In this section, we compare our proposed algorithm with the
state-of-the-art methods of PDFP$^{2}$O in the CT image
reconstruction problem. The test image is the standard benchmark
Shepp-Logan phantom (see Figure \ref{exper1}) with size of
$256\times 256$ and the pixels values vary from $0$ to $1$. All
experiments were performed under Windows 7 and MATLAB (R2009a)
running on a desktop with an Intel Core 2 Quad cpu and 2GB of RAM.

We use the toolbox of AIRTools to create 2D tomography test
problems. In the experiment setting, the projection angle is chosen
from $0$ to $175$ degrees in increments of $10$ degrees and the
number of parallel rays in each angle is $p=362$. We add Gaussian
white noise $e$ of relative magnitude $\|e\| / \|Ax_{true}\| =
0.01$.

\begin{figure}[H]
\setlength{\floatsep}{0pt} \setlength{\abovecaptionskip}{-20pt}
\centering
    \scalebox{0.6}{\includegraphics{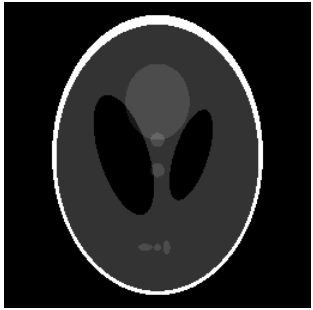}}
  \caption[]{The original Shepp-Logan phantom image}%
\label{exper1}
\end{figure}

The performances were evaluated in terms of the mean signal-to-noise
ratio (SNR) and the relative error (RelErr). The definitions of
 SNR and RelErr are given as follows:
 \begin{equation}
SNR = 20 log10 \left(  \frac{\|x_{true}\|}{\| x - x_{true} \|}
\right),
 \end{equation}
and
\begin{equation}
RelErr = \frac{\|x-x_{true}\|^2}{\| x_{true}  \|^2},
\end{equation}
 where $x$ and $x_{true}$ are the reconstructed image and
original image, respectively.

We follow the paper of [1] to choose the parameters for the
PDFP$^{2}$O. That is, the $\gamma = 2/\beta$, where $\beta$ is the
Lipschitz constant, and $\lambda = 1/8$. For our proposed algorithm,
we choose the dynamic stepsize $\gamma_n$ as follows:
\begin{equation}
\gamma_n = \frac{f_2(x_n)}{\|\nabla f_2(x_n)\|^2},
\end{equation}
where $f_2(x_n)= \|Ax_n -b\|^2$.

We tested anisotropic total variation and isotropic
 total variation regularization term and found the performance of anisotropic total variation
slightly better than isotropic
 total variation. Therefore, we only present results using anisotropic total variation here.
The reconstructed image is shown in Figure \ref{exper1}. As we can
see, both the algorithms achieve the good performance to reconstruct
the original image.

\begin{figure}[H]
\setlength{\floatsep}{0pt} \setlength{\abovecaptionskip}{-40pt}
\centering
    \scalebox{0.7}{\includegraphics{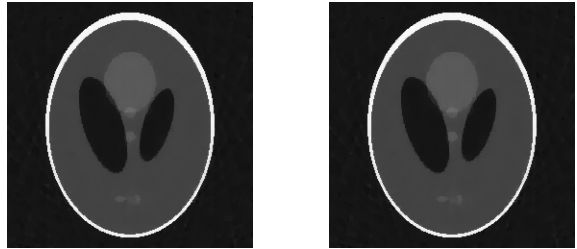}}
  \caption[]{The image reconstructed by the PDFP$^{2}$O and PDFP$^{2}$O$_{DS}$. Their SNR are 23.43 and 23.42 (db), respectively. }%
\label{exper1}
\end{figure}

\begin{figure}[H]
\setlength{\floatsep}{0pt} \setlength{\abovecaptionskip}{0pt}
\centering
    \scalebox{0.7}{\includegraphics{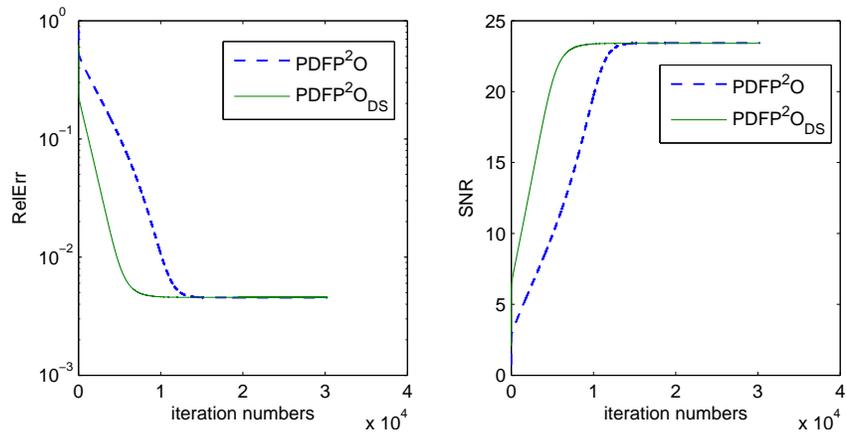}}
  \caption[]{The comparison of SNR and RelErr between PDFP$^{2}$O and PDFP$^{2}$$O_{DS}$}%
\label{expersnr1}
\end{figure}

We can see from Figure \ref{expersnr1} that the proposed algorithm
perform better than the PDFP$^{2}$O. Since the dynamic stepsize was
introduced in PDFP$^{2}$O$_{DS}$, it converges faster than the
original with constant stepsize. The more details of the choice of parameters $\gamma_{n}$ and $\lambda_{n}$ can be found in [25].

\noindent \textbf{Acknowledgements}

This work was supported by the National Natural Science Foundation
of China (11131006, 41390450, 91330204, 11401293), the National
Basic Research Program of China (2013CB 329404), the Natural Science
Foundations of Jiangxi Province (CA20110\\
7114, 20114BAB 201004).

\end{document}